 \newtheorem{thm}{}[section]
 \newtheorem{theorem}[thm]{Theorem}
 \newtheorem{lemma}[thm]{Lemma}
 \newtheorem{proposition}[thm]{Proposition}
 \theoremstyle{definition}
 \numberwithin{equation}{section}
\newcommand{\NN}{\ensuremath{\mathbb{N}}}
\newcommand{\FF}{\ensuremath{\mathbb{F}}}
\newcommand{\Sym}{\ensuremath{\mathbb{S}}}
\newcommand{\XX}{\ensuremath{\mathbb{X}}}
\newcommand{\YY}{\ensuremath{\mathbb{Y}}}
\newcommand{\xx}{\ensuremath{\mathbf{x}}}
\newcommand{\yy}{\ensuremath{\mathbf{y}}}
\newcommand{\ee}{\ensuremath{\mathbf{e}}}
\newcommand{\sss}{\ensuremath{\mathbf{s}}}
\newcommand{\SSB}{\ensuremath{\mathcal{S}}}
\newcommand{\EE}{\ensuremath{\mathcal{E}}}
\newcommand{\ww}{\ensuremath{\mathbf{w}}}
\newcommand{\vv}{\ensuremath{\mathbf{v}}}
\newcommand{\WW}{\ensuremath{\mathcal{W}}}
\newcommand{\OO}{\ensuremath{\mathcal{O}}}
\newcommand{\Id}{\ensuremath{\mathrm{Id}}}
\newcommand{\BB}{\ensuremath{\mathcal{B}}}
\newcommand{\DD}{\ensuremath{\mathcal{D}}}
\newcommand{\II}{\ensuremath{\mathrm{I}}}
\newcommand{\supp}{\operatorname{supp}}
\newcommand{\spn}{\operatorname{span}}
\begin{document}

\title[Garling sequence  spaces are not superreflexive]{Non-superreflexivity of Garling sequence spaces and applications to the existence of special types of conditional bases} 

\author[F. Albiac]{Fernando Albiac}\address{Mathematics Department--InaMat\\ Universidad P\'ublica de Navarra\\
 Pamplona 31006\\  Spain} 
\email{fernando.albiac@unavarra.es}
 
\author[J. L.  Ansorena]{Jos\'e Luis  Ansorena}\address{Department of Mathematics and Computer Sciences\\
Universidad de La Rioja\\
 Logro\~no 26004\\ Spain} 
\email{joseluis.ansorena@unirioja.es}
 
\author[S. J. Dilworth]{Stephen J. Dilworth}\address{Department of Mathematics\\ University of South Carolina \\ Co\-lum\-bia
SC 29208 \\ USA} 
\email{dilworth@math.sc.edu}
 
 \author[Denka Kutzarova]{Denka Kutzarova}
 \address{ 
 Department of Mathematics University of Illinois at Urbana-Cham\-paign  \\ 
 Urbana, IL 61801 \\ USA \\and
Institute of Mathematics\\ Bulgarian Academy of Sciences\\
 Sofia\\ Bulgaria.
 } \email{denka@math.uiuc.edu}

\subjclass[2010]{46B45, 46B25, 46B15, 46B10, 46B07, 41A65} 

\keywords{subsymmetric basis, Garling spaces, sequence spaces, superreflexivity, Besov spaces, conditional bases, conditionality constants, almost greedy bases, complemented subspaces}

\begin{abstract} In this paper we settle in the negative the problem of the superreflexivity of Garling sequence spaces by showing that they contain  a complemented subspace isomorphic to a non superreflexive mixed-norm sequence space. As a by-product of our work, we give applications of this result  to the study of conditional Schauder bases and conditional almost greedy bases in this new class of Banach spaces.
\end{abstract} 

\thanks{F. Albiac and J. L. Ansorena acknowledge the support of the grant MTM2014-53009-P (MINECO, Spain). F. Albiac was also  supported by the grant    MTM2016-76808-P (MINECO, Spain). S. J. Dilworth was supported by the National Science Foundation under Grant Number  DMS-1361461. S. J. Dilworth and Denka Kutzarova were supported by the Workshop in Analysis and Probability at Texas A\&M University in 2017. }

\maketitle

\section{Introduction and background}\label{Introduction}

 \noindent   Suppose $1\le p<\infty$ and let  $\ww=(w_j)_{j=1}^\infty$ belong to the set of weights
 \[
 \WW:=\left\{(w_j)_{j=1}^\infty\in c_0\setminus\ell_1:1=w_1\geq w_2 \ge\cdots w_j \ge w_{j+1} \ge\cdots>0\right\}.\]
 The \textit{Garling sequence space}, denoted  $g(\ww,p)$, is the Banach space consisting of all  scalar sequences $f=(a_j)_{j=1}^\infty$ such that
\[
 \Vert  f  \Vert_{g(\ww,p)} = \sup_{\phi\in\OO } \left( \sum_{j=1}^\infty |a_{\phi(j)}|^p w_j \right)^{1/p}<\infty,
 \] 
 where  $\OO$ denotes  the set of all increasing functions from $\NN$ to $\NN$.

The study of the isomorphic structure of these  spaces, which generalize an example of Garling from \cite{Garling1968}, has been recently initiated in  \cite{AAW1}. For expositional ease, we have gathered in Theorem~\ref{PreviousResults}  a few geometric properties of $g(\ww,p)$ that will help the reader to contextualize the results herein.

\begin{theorem}[see \cites{AAW1,  AALW}]\label{PreviousResults}  Let  $1\le p<\infty$ and $\ww\in \WW$. Then:
\begin{itemize}

\item[(a)]  The unit vector system $(\ee_j)_{j=1}^\infty$  is a $1$-subsymmetric basis of $g(\ww,p)$  which is not symmetric.

\item[(b)] Any subsymmetric basis of $g(\ww,p)$ is equivalent to $(\ee_j)_{j=1}^\infty$.

\item[(c)] $g(\ww,p)$ is reflexive if and only if $p>1$.

\item[(d)] For every  $\varepsilon>0$ and every infinite dimensional subspace  $\YY$ of $g(\ww,p)$ there is a further subspace $\mathbb Z\subseteq \YY$ that is $(1+\varepsilon)$-isomorphic to $\ell_p$ and 
$(1+\varepsilon)$-complemented in $g(\ww,p)$.

\end{itemize}
\end{theorem}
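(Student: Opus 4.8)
The plan is to prove the four items in the order (a), (c), (d), (b), using earlier items where convenient; the constant leverage is the defining formula $\Vert f\Vert_{g(\ww,p)}=\sup_{\phi\in\OO}\bigl(\sum_j|a_{\phi(j)}|^pw_j\bigr)^{1/p}$, and I write $W_m=\sum_{j=1}^m w_j$.

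\emph{Item (a).} That $(\ee_j)_{j=1}^\infty$ is a monotone, $1$-unconditional basis is immediate: passing to a ``signed subset'' of the coordinates of $f$ cannot increase any inner sum $\sum_j|a_{\phi(j)}|^pw_j$, and the finitely supported sequences are dense (a short estimate using $\ww\in c_0$). For $1$-subsymmetry, note that for increasing $\sigma\colon\NN\to\NN$ the assignment $\phi\mapsto\sigma\circ\phi$ maps $\OO$ into itself, giving $\Vert\sum_ja_j\ee_{\sigma(j)}\Vert\le\Vert\sum_ja_j\ee_j\Vert$, while composing a near-optimal $\phi$ for $(a_j)$ on the left by $\sigma$ yields the reverse inequality; hence every subsequence of $(\ee_j)$ is isometrically equivalent to $(\ee_j)$, and with $1$-unconditionality the basis is $1$-subsymmetric. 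The only real point is non-symmetry: here one uses that for a non-increasing vector the identity is an optimal $\phi$, so $\Vert f\Vert_{g(\ww,p)}=(\sum_j|a_j|^pw_j)^{1/p}$ whenever $(|a_j|)$ is non-increasing, and then produces, for each $n$, a finitely supported non-increasing $f_n$ and a rearrangement $g_n$ of it with $\Vert f_n\Vert_{g(\ww,p)}/\Vert g_n\Vert_{g(\ww,p)}\to\infty$. The mechanism is that no increasing $\phi$ can pair the large coordinates of $g_n$ with the large weights; calibrating the decay of $f_n$ to that of $\ww$ -- this is exactly where $\ww\notin\ell_1$ enters, to keep the numerator unbounded while the denominator stays bounded -- finishes (a), and I expect this calibration to be the only delicate step.

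\emph{Item (c).} First I record that $g(\ww,p)$ satisfies an upper $p$-estimate: for finitely and disjointly supported $x_1,\dots,x_m$ one has $\Vert\sum_ix_i\Vert_{g(\ww,p)}^p\le\sum_i\Vert x_i\Vert_{g(\ww,p)}^p$, because for any $\phi\in\OO$ the contribution of the indices lying in $\supp x_i$ is at most $\Vert x_i\Vert_{g(\ww,p)}^p$ (enumerate them increasingly as $j_1<j_2<\cdots$ and use $w_{j_\ell}\le w_\ell$). When $p>1$ this excludes normalized block bases equivalent to the $\ell_1$-basis, so $(\ee_j)$ is shrinking; and $(\ee_j)$ is boundedly complete for every $p$, since $\sup_n\Vert\sum_{j\le n}a_j\ee_j\Vert_{g(\ww,p)}<\infty$ forces $(a_j)\in g(\ww,p)$ straight from the definition. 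By James's theorem $g(\ww,p)$ is reflexive for $p>1$. For $p=1$, item (d) exhibits a copy of $\ell_1$ in $g(\ww,1)$ (and its proof uses no reflexivity), so $g(\ww,1)$ is not reflexive.

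\emph{Item (d).} Given an infinite dimensional $\YY\subseteq g(\ww,p)$, a standard gliding hump (available since $(\ee_j)$ is a basis) yields in $\YY$ a normalized sequence that is, up to an arbitrarily small perturbation, a block basic sequence of $(\ee_j)$, so it suffices to produce inside any block subspace a $(1+\varepsilon)$-$\ell_p$-copy that is $(1+\varepsilon)$-complemented. I would take ``flat'' blocks $v_n=W_{|I_n|}^{-1/p}\sum_{j\in I_n}\ee_j$ on disjoint finite sets $I_n$, choosing $|I_n|$ so large compared with $|I_1|+\cdots+|I_{n-1}|$ that the $|I_n|$ weights of $\ww$ actually carried by $I_n$ sum to at least $(1-\varepsilon)W_{|I_n|}$; this is possible precisely because the partial sums $W_m$ are unbounded, i.e.\ $\ww\notin\ell_1$. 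Then $\Vert v_n\Vert_{g(\ww,p)}=1$, the upper $p$-estimate gives $\Vert\sum_na_nv_n\Vert_{g(\ww,p)}\le(\sum_n|a_n|^p)^{1/p}$, and letting $\phi$ run in order through all the nonzero coordinates gives the matching lower bound $\Vert\sum_na_nv_n\Vert_{g(\ww,p)}^p\ge(1-\varepsilon)\sum_n|a_n|^p$; hence $(v_n)$ is $(1+\varepsilon)$-equivalent to the $\ell_p$-basis. Finally the block-averaging map $P(f)=\sum_n\bigl(|I_n|^{-1}\sum_{j\in I_n}f_j\bigr)\mathbf 1_{I_n}$ is a projection onto $[v_n]$ of norm close to $1$ (a convexity/$1$-unconditionality argument), which yields the complementation.

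\emph{Item (b).} Let $(x_n)$ be a subsymmetric basis of $g(\ww,p)$. It cannot be equivalent to the $\ell_1$-basis: for $p>1$ that would contradict reflexivity, and for $p=1$ it would force $g(\ww,1)\cong\ell_1$, impossible since the fundamental function $\Vert\ee_1+\cdots+\ee_n\Vert_{g(\ww,1)}=W_n$ is $o(n)$. Hence $(x_n)$ is weakly null, so by Bessaga--Pe\l czy\'nski, after passing to a subsequence (harmless, by subsymmetry) and a small perturbation, we may assume $(x_n)$ is a subsymmetric block basic sequence of $(\ee_j)$ whose closed span is isomorphic to all of $g(\ww,p)$. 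It then remains to show such a block basis is equivalent to $(\ee_j)$: the upper estimate again comes from the upper $p$-estimate together with subsymmetry, and for the lower estimate one invokes the defining supremum in $\Vert\cdot\Vert_{g(\ww,p)}$ to recover, for each prescribed $\psi\in\OO$, a single $\phi\in\OO$ that reads enough of each relevant block. The subtle point -- and the step I expect to be the main obstacle -- is that \emph{not every} subsymmetric block basis of $(\ee_j)$ is equivalent to $(\ee_j)$ (the flat blocks of item (d) are subsymmetric but span an $\ell_p$), so the argument must genuinely use that our block basis spans an isomorphic copy of the \emph{whole} space $g(\ww,p)$ rather than of a subspace such as $\ell_p$; identifying exactly which subsymmetric block bases are ``large enough'' for this is the heart of the matter. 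Once $(x_n)\sim(\ee_j)$ is established, (b) follows.
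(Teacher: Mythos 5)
First, a point of comparison: the paper does not prove Theorem~\ref{PreviousResults} at all --- it is imported as background from \cite{AAW1} and \cite{AALW} --- so there is no in-paper argument to measure you against. Judged on its own, your proposal gets the routine parts right: the verification of $1$-subsymmetry via $\phi\mapsto\sigma\circ\phi$, the upper $p$-estimate for disjointly supported vectors (group the indices $j$ with $\phi(j)\in\supp x_i$ and use $w_{j_\ell}\le w_\ell$), and the deduction of (c) from bounded completeness plus the exclusion of $\ell_1$ via James. But each of the three harder items has a genuine gap. In (a), the mechanism you describe for non-symmetry --- ``no increasing $\phi$ can pair the large coordinates of $g_n$ with the large weights'' --- does not work: by the very subsymmetry you just proved, an increasing $\phi$ that selects only the positions of the large coordinates still pairs them with $w_1,w_2,\dots$ no matter how far out they sit, so relocating large coordinates costs nothing. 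The actual witness of non-symmetry is the order-sensitivity of concatenations of long \emph{flat} blocks: a block of $k$ equal coordinates contributes $\alpha\sum_{j=r+1}^{r+k}w_j$ where $r$ is the rank at which $\phi$ starts reading it, and $\ww\notin\ell_1$ makes this nearly independent of $r$ while the block crowds all later coordinates into small weights. This is precisely Lemma~\ref{Lemma:1} and Proposition~\ref{Lemma:3} of the present paper: $\Vert\vv[\kappa]\Vert_g\le t$ in one order while the reversed concatenation has norm $\ge n^{1/p}$.

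In (d) the gap is more serious: the statement requires $\mathbb Z\subseteq\YY$, but your flat blocks $v_n=W_{|I_n|}^{-1/p}\sum_{j\in I_n}\ee_j$ are specific vectors of $g(\ww,p)$ that in general do not lie in the block subspace you reduced to; what you construct is a well-complemented $\ell_p$ inside $g(\ww,p)$ (essentially the $n=1$ layer of Theorem~\ref{ComplementedBesov}), not inside $\YY$. The known proof must work with the given normalized blocks $y_n$ themselves and pass to suitable normalized combinations of them, using that the tail quantities $v_k(y)=\sup_\phi\sum_j|y_{\phi(j)}|^pw_{j+k}$ tend to $0$, and the complementing functionals must likewise be adapted to the $y_n$ (the paper's projection in Lemma~\ref{gwpTolp} uses $w$-weighted averages for exactly this reason). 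Finally, in (b) you candidly leave open the step you correctly identify as the heart of the matter --- why a subsymmetric block basis spanning a copy of the whole space must be equivalent to $(\ee_j)$ rather than to the $\ell_p$ basis --- and the preliminary reduction also leans on an unproved dichotomy (subsymmetric and not equivalent to the $\ell_1$ basis implies weakly null) and, for $p=1$, on $g(\ww,1)\not\cong\ell_1$, which does not follow merely from $W_n=o(n)$ without invoking uniqueness of the unconditional basis of $\ell_1$ or the Schur property. So the proposal is a reasonable map of where the difficulties lie, but it does not yet prove (a), (b), or (d).
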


Garling sequence spaces can be regarded as the subsymmetric counterpart of Lorentz sequence spaces $d(\ww,p)$, consisting of all scalar sequences $f=(a_n)_{n=1}^\infty$ such that  \[
 \Vert  f\Vert_{d(\ww,p)}=\sup_{\sigma\in\Pi}  \left(\sum_{n=1}^{\infty} |a_{\sigma(n)}|^p w_n\right)^{1/p}<\infty,
  \]
where $\Pi$ is the set of permutations of $\NN$.
The  spaces $d(\ww,p)$ were thoroughly investigated by Altshuler, Casazza and Lin in the early 1970's in the papers \cites{ACL73,CL74}. Subsequently, Altshuler showed in   \cite{Altshuler1975} that $d(\ww,p)$ is superreflexive if and only if $p>1$ and the weight $\ww$ is \textit{regular}, i.e.,
\[
 \sup_m \frac{1}{m w_m} \sum_{j=1}^m w_j<\infty.
 \] 

As for  the spaces $g(\ww,p)$, the first attempt to determine whether or not  they were superreflexive was undertaken in \cite{AAW2}. Using nonlinear tools from approximation theory such as the fundamental function of the canonical basis,
  the authors showed that $g(\ww,p)$ fails to be superreflexive if the weight $\ww$ is not regular.  
 
In this note  we adopt a radically different, intrinsic approach, in the sense that    the methods we use fall within the linear category.  To be precise, in Section~\ref{Main} we  study in detail the complemented subspaces of $g(\ww,p)$, and having done the groundwork we settle in the negative the problem of the superreflexivity of Garling sequence spaces in all cases by proving the following theorem:

\begin{theorem}\label{NoSR} The Banach space $g(\ww,p)$ is not superreflexive for any $1\le p<\infty$ and any $\ww\in\WW$.
\end{theorem}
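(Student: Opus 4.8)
The plan is to exhibit inside $g(\ww,p)$ a complemented subspace isomorphic to a mixed-norm sequence space that is manifestly not superreflexive, and then to invoke the fact that superreflexivity is an isomorphic invariant which passes to complemented subspaces. Since, by Theorem~\ref{PreviousResults}(c), $g(\ww,1)$ is not even reflexive and hence not superreflexive, we may assume throughout that $p>1$; in this range $g(\ww,p)$ is reflexive, so the subspace we are after is automatically reflexive as well.

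\emph{The target space.} Let $\XX=\left(\bigoplus_{n=1}^{\infty}F_n\right)_{\ell_p}$, where $(F_n)_n$ is a sequence of finite-dimensional spaces with $\dim F_n\to\infty$ such that each $F_n$ is $\lambda$-isomorphic to $\ell_{\infty}^{\dim F_n}$ (alternatively to $\ell_{1}^{\dim F_n}$) for a constant $\lambda$ independent of $n$. For $1<p<\infty$ this $\XX$ is reflexive, yet it crudely finitely represents $\ell_{\infty}$ (respectively $\ell_{1}$) and hence is not superreflexive. It thus suffices to produce a complemented copy of $\XX$ inside $g(\ww,p)$.

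\emph{Building the copy of $\XX$.} The starting point is the closed formula for the norm of a finitely supported vector $f=(a_j)_j$: if $\supp f=\{k_1<k_2<\cdots<k_m\}$, then
\[
\Vert f\Vert_{g(\ww,p)}^{\,p}=\max\left\{\sum_{\ell=1}^{r}|a_{k_{i_\ell}}|^{p}\,w_{\ell}\ :\ 1\le i_1<i_2<\cdots<i_r\le m\right\}.
\]
The crucial feature --- and the point where $g(\ww,p)$ parts company with the Lorentz space $d(\ww,p)$ --- is that the nonzero entries of $f$ enter this supremum \emph{in the order of their positions} and cannot be re-sorted so as to be paired with the largest weights. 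Exploiting this rigidity, one fixes a rapidly increasing sequence of block lengths (with growth rate dictated by $\ww$), a well-separated family of consecutive intervals of those lengths, and, on the $n$-th group of intervals, places $\dim F_n$ disjointly supported normalized vectors $u_{n,1},\dots,u_{n,\dim F_n}$ with a carefully chosen profile. One then checks: (i) within each group $(u_{n,k})_k$ is, uniformly in $n$, equivalent to the unit vector basis of $F_n$ --- the $\ell_{\infty}$- (or $\ell_{1}$-) behaviour being forced precisely because the profile is arranged to run against the monotonicity of $\ww$, so that the supremum defining the norm cannot average the block down to an $\ell_p$-average; and (ii) distinct groups combine, up to a uniform constant, as an $\ell_p$-direct sum, which follows from the $1$-subsymmetry of $(\ee_j)_j$ together with the separation of the supports. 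Granting (i) and (ii), $Y:=\overline{\spn}\{u_{n,k}:n\ge1,\ 1\le k\le\dim F_n\}$ is isomorphic to $\XX$.

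\emph{Complementation and conclusion.} Using the $1$-unconditionality of the canonical basis one introduces biorthogonal functionals $u_{n,k}^{*}$ supported on the respective blocks and checks, once more via the norm formula and the subsymmetry of $(\ee_j)_j$, that $Pf=\sum_{n,k}u_{n,k}^{*}(f)\,u_{n,k}$ is a bounded linear projection of $g(\ww,p)$ onto $Y$. Hence $g(\ww,p)$ has a complemented non-superreflexive subspace and is therefore not superreflexive; together with the case $p=1$, this proves the theorem. The main obstacle is the construction in the previous paragraph: choosing the block lengths and profiles and proving the two-sided estimates (i) and (ii) \emph{uniformly for every} $\ww\in\WW$ --- in particular for the regular weights, for which the fundamental-function argument of \cite{AAW2} is silent and the associated Lorentz space $d(\ww,p)$ is itself superreflexive.
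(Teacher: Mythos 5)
Your overall strategy coincides exactly with the paper's: exhibit a complemented copy of a mixed-norm space $(\oplus_{n=1}^\infty \ell_\infty^n)_p$ inside $g(\ww,p)$ (this is Theorem~\ref{ComplementedBesov}) and conclude because that space finitely represents $\ell_\infty$ while superreflexivity passes to subspaces. The framing --- the harmless reduction to $p>1$, the choice of target space, the two estimates (i) and (ii), the complementation by biorthogonal functionals --- is all sound. But the proposal has a genuine gap, and you have located it yourself in your last sentence: everything hinges on producing, for an \emph{arbitrary} $\ww\in\WW$, arbitrarily long families of disjointly supported normalized vectors $u_{n,1},\dots,u_{n,n}$ satisfying the upper $\ell_\infty$ estimate $\Vert\sum_i a_i u_{n,i}\Vert\lesssim \max_i|a_i|$ with a constant independent of $n$. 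This cannot be dispatched by appealing to ``a carefully chosen profile arranged to run against the monotonicity of $\ww$'': for disjointly supported normalized vectors the generic behaviour in these spaces is $\ell_p$, not $\ell_\infty$ (Proposition~\ref{lpdominates} gives precisely the $\ell_p$ upper bound), and for a regular weight the analogous construction in the symmetric space $d(\ww,p)$ is provably impossible, since that space is superreflexive. So your estimate (i) is the theorem, not a checkable step.

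The paper fills this gap with a specific mechanism that your proposal would need to reproduce. The vectors are normalized constant-coefficient blocks $\vv[k]$, and the crux is Proposition~\ref{Lemma:3}: for any $t>1$ and any $n$ one can choose lengths $k_1,\dots,k_n$ so that the concatenation $(\vv[k_1],\dots,\vv[k_n])$ still has norm at most $t$, even though each block separately has norm $1$. This is achieved by a right-to-left recursion (Lemmas~\ref{Lemma:1} and~\ref{Lemma:2}): given a tuple $f$ of norm $<t$, one prepends a constant block whose height $\alpha_k^{1/p}$ is tuned --- using the gliding-hump Lemma~\ref{Lemma:GlidingHumpWeight} together with a minimality argument on the quantities $\alpha_i=(s-v_i)/\sum_{j=1}^{i}w_j$ --- so that the norm of the extended tuple stays below $t$ while the new block, once shifted to the right of all earlier blocks, still has norm at least $1$. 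Once this is in place, your steps (i), (ii) and the complementation go through essentially as you describe: the $\ell_\infty^n$ equivalence within a group follows from $1$-unconditionality plus $\Vert\sum_i \yy_{i,n}\Vert_g\le t$, the $\ell_p$ combination of groups from Proposition~\ref{lpdominates}, and the boundedness of the projection from Lemma~\ref{gwpTolp}. Without Proposition~\ref{Lemma:3} or an equivalent substitute, the argument does not close.
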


Hence, while comparing Theorem~\ref{PreviousResults} with the corresponding results  in \cite{ACL73}
 reflects the fact that  Garling sequence spaces behave to some extent similarly to Lorentz sequence spaces replacing symmetry with subsymmetry, Theorem~\ref{NoSR} exhibits an important structural difference between these two types of  spaces. 

 In Section~\ref{ConditionalBases} we will apply the  results from Section~\ref{Main} to  investigate the existence of conditional bases  with some special features in $g(\ww, p)$, which bridges our results  with the theory of greedy approximation in Banach spaces.
 
 Standard Banach space notation and terminology are used throughout (see \cite{AlbiacKalton2016}). For clarity, however, we record the notation that is used most heavily.
We write   $\FF$ for the real or complex scalar field. 
Given a set of indices $I$, we denote by
 $(\ee_i)_{i \in I}$  the unit vector system of $\FF^I$, i.e.,  $\ee_i=(\delta_{i,j})_{j\in I}^\infty$, were
$\delta_{i,j}=1$ if $i=j$ and $\delta_{i,j}=0$ otherwise.
If $a_j$ are elements in a vector space, we will use the convention $\sum_{j=1}^0 a_j=0$.

Given families of non-negative real numbers $(\alpha_i)_{i\in I}$ and $(\beta_i)_{i\in I}$ and $0<C<\infty$ the symbol $\alpha_i\lesssim_C \beta_i$ for $i\in I$ means that $\alpha_i\le C \beta_i$ for all $i\in I$, while $\alpha_i\approx_C \beta_i$ for $i\in I$ means that $\alpha_i\lesssim_C \beta_i$ and $\beta_i\lesssim_C \alpha_i$ for $i\in I$. 
Now suppose $(\xx_n)_{n=1}^\infty$ and $(\yy_n)_{n=1}^\infty$ are basic sequences in $\XX$ and $\YY$, respectively.  
We say that $(\yy_n)_{n=1}^\infty$ $C$-\textit{dominates} $(\xx_n)_{n=1}^\infty$ and write $(\xx_n)_{n=1}^\infty\lesssim_C(\yy_n)_{n=1}^\infty$ if
$
\left\Vert\sum_{n=1}^\infty a_n\xx_n\right\Vert_X\lesssim_C\left\Vert\sum_{n=1}^\infty a_n\yy_n\right\Vert_Y
$ 
for all   $(a_n)_{n=1}^\infty\in c_{00}$.
Whenever $(\xx_n)_{n=1}^\infty\lesssim_C(\yy_n)_{n=1}^\infty$ and $(\yy_n)_{n=1}^\infty\lesssim_C(\xx_n)_{n=1}^\infty$, we say that $(\xx_n)_{n=1}^\infty$ and $(\yy_n)_{n=1}^\infty$ are $C$-\textit{equivalent}, and write $(\xx_n)_{n=1}^\infty\approx_C(\yy_n)_{n=1}^\infty$.
In all the above cases, when the value of the constant $C$ is irrelevant, we simply drop it from the notation.

 The norm of a linear operator from $T$ from a Banach space $\XX$ into a Banach space $\YY$ 
is denoted by $\Vert T\colon \XX\to\YY\Vert$.
Given a basis  $\BB=(\xx_j)_{j=1}^\infty$ for a Banach space $\XX$ the \textit{support} of  $f=\sum_{j=1}^\infty a_j\, \xx_j\in\XX$ 
with respect to $\BB$ is the set
$\supp(f)=\{j\colon a_j\not=0\}$. 
The \textit{coordinate projection}  on a set $A$ will be denoted by $S_A[\BB,\XX]$ or, if $\BB$ and $\XX$ are clear from context, $S_A$.
Given $1\le p <\infty$, $\left(\oplus_{n=1}^\infty \XX_n\right)_{n=1}^\infty$ denotes the direct sum in the 
$\ell_p$ sense of the sequence of Banach spaces $(\XX_n)_{n=1}^\infty$. 

More specialized notions from Banach space theory or approximation theory will be introduced as needed.

\section{Complemented subspaces of Garling sequence spaces}\label{Main}
\noindent
Theorem~\ref{NoSR} will be a consequence of the following result, which trivially implies that $\ell_\infty$ is finitely representable in any  Garling sequence space.

\begin{theorem}\label{ComplementedBesov}Let $1\le p<\infty$ and $\ww\in\WW$. For each $\varepsilon>0$ there is a  sublattice $Z\subseteq g(\ww,p)$
that is $(1+\varepsilon)$-lattice complemented in $g(\ww,p)$ and $(1+\varepsilon)$-lattice isomorphic to 
$(\oplus_{n=1}^\infty \ell_\infty^n)_p$.
\end{theorem}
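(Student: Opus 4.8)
The plan is to realize the mixed-norm space $(\oplus_{n=1}^\infty \ell_\infty^n)_p$ inside $g(\ww,p)$ as a block subspace built on carefully chosen disjoint intervals of integers, and then to produce a norm-one (up to $\varepsilon$) lattice projection onto it. The guiding intuition is that $\|\cdot\|_{g(\ww,p)}$, when evaluated on a vector supported on an interval $I$, ``sees'' the decreasing rearrangement of the coefficients weighted by an \emph{initial} segment $w_1,\dots,w_{|I|}$ of the weight; hence a single coefficient of size $t$ on a block of length $n$ contributes like $t$ (since $w_1=1$), which is the $\ell_\infty^n$ behaviour, while across far-apart blocks the $p$-th powers essentially add because the relevant weights have decayed. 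So the first step is to exploit $\ww\in c_0\setminus\ell_1$: choose a rapidly increasing sequence of lengths $n_1<n_2<\cdots$ and a rapidly increasing sequence of ``gap'' indices so that, setting $N_k=n_1+\dots+n_k$ and letting $I_k$ be the $k$-th block of $n_k$ consecutive integers placed after a long run of indices $j$ with $w_j$ already extremely small, the tail weights $\sum_{j>N_k} w_j$ and the ``late'' weights $w_j$ for $j$ in the relevant range are all $\le \delta_k$ for a fast-decaying $\delta_k$. This is the place where $\ww\notin\ell_1$ (to get blocks as long as we want with $w_j$ still bounded below on them) and $\ww\in c_0$ (to get the cross-block weights as small as we want) are both used.

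Next I would set $Z=\overline{\operatorname{span}}\{\ee_j: j\in \bigcup_k I_k\}$, with the natural identification of the $k$-th slab with $\ell_\infty^{n_k}$ via $(\ee_j)_{j\in I_k}$. The key estimate is a two-sided bound: for $f=\sum_k f_k$ with $f_k$ supported on $I_k$,
\[
\Big(\sum_k \|f_k\|_\infty^p\Big)^{1/p}\le \|f\|_{g(\ww,p)}\le (1+\varepsilon)\Big(\sum_k \|f_k\|_\infty^p\Big)^{1/p}.
\]
The lower bound is immediate: for each $k$ pick a coordinate $j_k\in I_k$ achieving (nearly) $\|f_k\|_\infty$, feed the increasing enumeration $\phi$ hitting exactly $\{j_1,\dots,j_m\}$ into the supremum defining $\|\cdot\|_{g(\ww,p)}$, and use $w_1=1\ge w_2\ge\cdots$ to get $\sum_{i=1}^m \|f_{k_i}\|_\infty^p w_i$; letting $m\to\infty$ and rearranging gives the lower bound — one must be slightly careful to organize the blocks in decreasing order of $\|f_k\|_\infty$ to make the weights cooperate, or simply note that $w_i\ge w_\infty$... actually better: for the lower bound just take one coordinate per block in the order that makes $\|f_k\|_\infty$ decreasing, so the weights $w_1,w_2,\dots$ multiply the largest block-norms. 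For the upper bound I would fix any $\phi\in\OO$ and split the sum $\sum_j |a_{\phi(j)}|^p w_j$ according to which block $\phi(j)$ lands in: within a single block $I_k$ the weights used are $w_r, w_{r+1},\dots$ for some $r$, each $\le 1$, and at most $n_k$ of them occur, so that block contributes at most $\|f_k\|_\infty^p \sum_{r\le j< r+n_k} w_j$; the point is that for all but finitely many blocks (and, after reindexing, for \emph{all} blocks up to the $\varepsilon$) the index $r$ where a block's contribution starts is large, forcing $w_r$ — and hence the whole packet $\sum_{r}^{r+n_k-1} w_j\le n_k w_r$... no, $n_k w_r$ is not $\le 1$. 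The correct bookkeeping is: the packet of weights assigned to block $I_k$ is a set of at most $n_k$ of the numbers $w_1,w_2,\dots$, but these numbers are shared among all blocks and the blocks with large $\|f_k\|_\infty$ should be charged the large weights; arranging the $\delta_k$ to decay fast (e.g. $\sum_k n_{k+1}\delta_k<\varepsilon$ type conditions chosen recursively) makes every ``misallocation'' term negligible, yielding $\sum_j|a_{\phi(j)}|^p w_j\le \sum_k\|f_k\|_\infty^p\, w_{m_k}$ for a subsequence $m_1<m_2<\cdots$ of $\NN$, hence $\le \sum_k\|f_k\|_\infty^p$, up to $(1+\varepsilon)$. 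Taking the supremum over $\phi$ finishes it.

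Finally, for the complemented part: the natural candidate projection is $P=S_A$, the coordinate projection onto $A=\bigcup_k I_k$, which is a norm-one lattice projection on $g(\ww,p)$ because $(\ee_j)$ is a $1$-subsymmetric (in particular $1$-unconditional) basis, so $\|S_A\colon g(\ww,p)\to g(\ww,p)\|=1$ and its range is a sublattice; combined with the estimate above, $Z$ is $1$-lattice complemented in $g(\ww,p)$ and $(1+\varepsilon)$-lattice isomorphic to $(\oplus_k \ell_\infty^{n_k})_p$, and since the lengths $n_k$ range over an infinite set we may (by passing to a further sublattice, or simply by a standard reshuffling, since $(\oplus_n \ell_\infty^n)_p$ lattice-embeds complementably into $(\oplus_k \ell_\infty^{n_k})_p$ whenever $n_k\to\infty$) arrange the range to be exactly $(1+\varepsilon)$-isomorphic to $(\oplus_{n=1}^\infty \ell_\infty^n)_p$. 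The main obstacle is the upper-bound estimate: one must choose the blocks and the decay rates $\delta_k$ recursively and prove, uniformly over all $\phi\in\OO$, that no increasing selection function can ``concentrate'' small-index (large) weights onto blocks carrying small coefficients in a way that beats $\sum_k\|f_k\|_\infty^p$; making this rearrangement argument rigorous — essentially a combinatorial lemma about increasing maps $\NN\to\NN$ interacting with a fast-decaying weight — is the technical heart of the proof.
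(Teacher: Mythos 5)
There is a genuine gap, and it is structural rather than technical: the subspace $Z=\overline{\spn}\{\ee_j: j\in\bigcup_k I_k\}$, with $(\ee_j)_{j\in I_k}$ identified with the unit vector basis of $\ell_\infty^{n_k}$, cannot satisfy your two-sided estimate, no matter how the blocks $I_k$ are positioned or how fast the gaps between them grow. The reason is the $1$-subsymmetry of the unit vector system: $\Vert \II_m(f)\Vert_g=\Vert f\Vert_g$ for every shift $m$, so the span of $n_k$ consecutive unit vectors is \emph{isometric} to the span of the first $n_k$ unit vectors regardless of where the block sits; placing it ``after a long run of indices where $w_j$ is small'' changes nothing. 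In particular, for the constant vector $f_k=\sum_{j\in I_k}\ee_j$ one has $\Vert f_k\Vert_g=(\sum_{j=1}^{n_k}w_j)^{1/p}$, which tends to infinity with $n_k$ because $\ww\notin\ell_1$, while $\Vert f_k\Vert_\infty=1$; so your upper bound $\Vert f\Vert_g\le(1+\varepsilon)\bigl(\sum_k\Vert f_k\Vert_\infty^p\bigr)^{1/p}$ already fails for a single block. The lower bound fails as well: one unit coefficient in each of the first $m$ blocks gives $\Vert f\Vert_g=(\sum_{j=1}^m w_j)^{1/p}=o(m^{1/p})$ since $\ww\in c_0$. Your observation that a single coefficient of size $t$ contributes like $t$ (because $w_1=1$) is correct, but the $\ell_\infty^n$ behaviour you need concerns $n$ coefficients simultaneously, and there the Garling norm charges the whole initial segment $w_1+\cdots+w_n$.

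The paper circumvents exactly this obstruction by taking as the ``unit vectors'' of the $n$-th copy of $\ell_\infty^n$ not single coordinates but $n$ normalized constant-coefficient blocks $\vv[k_{1,n}],\dots,\vv[k_{n,n}]$ of carefully chosen lengths. The heart of the argument (Lemma~\ref{Lemma:1} and Proposition~\ref{Lemma:3}, resting on Lemma~\ref{Lemma:GlidingHumpWeight}) is a gliding-hump construction that selects the lengths recursively so that the concatenation of all $n$ normalized blocks still has $g$-norm at most $t=\sqrt{1+\varepsilon}$; together with $1$-unconditionality this yields the $\ell_\infty^n$ upper estimate, while normalization of each block yields the lower one. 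The complementation is then obtained not from the coordinate projection $S_A$ but from the weighted-average functionals $\yy_{i,n}^*$ (controlled via Lemma~\ref{gwpTolp}), and the $\ell_p$-sum across $n$ comes from Proposition~\ref{lpdominates}. The replacement of ``coordinates on a block'' by ``normalized constant blocks with recursively tuned lengths'' is the idea your plan is missing, and without it the approach does not go through.
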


In turn, the proof of  Theorem~\ref{ComplementedBesov} relies on Lemmas~\ref{Lemma:GlidingHumpWeight} and 
~\ref{Lemma:3} below. The former  is elementary and exhibits a ``gliding hump'' behaviour of the weights in $\WW$. 
\begin{lemma}\label{Lemma:GlidingHumpWeight} Let $(w_j)_{j=1}^\infty$ be a non-increasing sequence of positive numbers with
$\sum_{j=1}^\infty w_j=\infty$. 
For every $m\ge 0$ we have
\[
\lim_k\frac{\sum_{j=m+1}^{m+k} w_j}{\sum_{j=1}^{k} w_j}=1.
\]
\end{lemma}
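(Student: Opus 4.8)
The plan is to show that the ratio $r_k := \big(\sum_{j=m+1}^{m+k} w_j\big)/\big(\sum_{j=1}^{k} w_j\big)$ tends to $1$ by squeezing it between quantities that converge to $1$, using only monotonicity of $(w_j)$ and divergence of $\sum_j w_j$. Since $m$ is fixed, write $\sum_{j=m+1}^{m+k} w_j = \sum_{j=1}^{m+k} w_j - \sum_{j=1}^{m} w_j = \sum_{j=1}^{k} w_j + \big(\sum_{j=k+1}^{m+k} w_j - \sum_{j=1}^{m} w_j\big)$, so that
\[
r_k = 1 + \frac{\sum_{j=k+1}^{m+k} w_j - \sum_{j=1}^{m} w_j}{\sum_{j=1}^{k} w_j}.
\]
Thus it suffices to prove that the numerator of the correction term is $o\big(\sum_{j=1}^k w_j\big)$ as $k\to\infty$.

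The key observation is a two-sided bound on the correction numerator. On one hand, $\sum_{j=k+1}^{m+k} w_j - \sum_{j=1}^m w_j \le \sum_{j=k+1}^{m+k} w_j \le m\, w_{k+1} \le m\, w_1$, since $(w_j)$ is non-increasing; in particular this is bounded above by the constant $m$ (as $w_1 = 1$ in our setting, but even $m w_1$ suffices). On the other hand, it is bounded below by $-\sum_{j=1}^m w_j$, again a constant independent of $k$. Hence the numerator is bounded in absolute value by a constant $C_m$ depending only on $m$. Meanwhile, the hypothesis $\sum_{j=1}^\infty w_j = \infty$ gives $\sum_{j=1}^k w_j \to \infty$. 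Therefore the correction term is at most $C_m/\sum_{j=1}^k w_j \to 0$, which yields $r_k \to 1$. The case $m=0$ is trivial since then $r_k = 1$ for all $k$.

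There is essentially no obstacle here: the only ingredients are the telescoping identity, the monotonicity of the weights to control the finitely many ``extra'' terms $w_{k+1},\dots,w_{k+m}$ by a fixed constant, and the divergence of the partial sums in the denominator. The statement does not even require $(w_j)\in c_0$; non-increasing positivity plus divergence of the series is all that is used. One might alternatively phrase the argument via the lower bound $\sum_{j=m+1}^{m+k} w_j \ge \sum_{j=1}^k w_j - \sum_{j=1}^m w_j$ together with the trivial upper bound $\sum_{j=m+1}^{m+k} w_j \le \sum_{j=1}^{m+k} w_j$ and $\sum_{j=1}^{m+k} w_j / \sum_{j=1}^k w_j \to 1$ (since the difference is again bounded by $m w_{k+1}$); but the telescoping computation above is the cleanest route and requires no appeal to the Stolz–Cesàro theorem.
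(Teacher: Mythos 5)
Your proof is correct, and it rests on exactly the same two ingredients as the paper's: monotonicity of $(w_j)$ to control the boundary terms and divergence of $\sum_j w_j$ to kill the constant-size correction. The paper merely packages the same estimates as a $\limsup\le 1$ / $\liminf\ge 1$ squeeze rather than your single bound $\bigl|\sum_{j=k+1}^{m+k} w_j-\sum_{j=1}^m w_j\bigr|\le C_m$, so the two arguments are essentially identical.
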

\begin{proof}Since $\ww$ is non-increasing,
\[
A:=\limsup_k\frac{\sum_{j=m+1}^{m+k} w_j}{\sum_{j=1}^{k} w_j}\le 1,
\]
and, since $\lim_k \sum_{j=1}^k w_j=\infty$,
\[
B:=\liminf_k\frac{\sum_{j=m+1}^{m+k} w_j}{\sum_{j=1}^{k} w_j}
=\liminf_k\frac{\sum_{j=1}^{m+k} w_j}{\sum_{j=1}^{k} w_j}
\ge 1.
\]
Since $B\le A$, we obtain  $A=B=1$.\end{proof}

 In order to state and prove the following lemmata, it is convenient to set some notation. Given a tuple $f=(a_j)_{j=1}^k$ 
 and $m\in\NN\cup\{0\}$, let us define the sequence $\II_m(f)$ in $c_{00}$  by 
\[
\II_m(f)=
(\underbrace{0,\dots,0}_{m},a_1, \dots, a_j, \dots, a_k, 0,\dots,0,\dots).
\]
Note that
$\II=\II_0$ is the natural embedding of $\cup_{k=1}^\infty \FF^k$ into $\FF^\NN$. We define
\[
\Vert f \Vert_g:=\Vert \II(f)\Vert_g, \quad f\in \cup_{k=1}^\infty\FF^k.
\] 
By the  $1$-subsymmetry of the unit vector basis we have $\Vert \II_m(f)\Vert_g=\Vert f\Vert_g$ for every tuple $f$ and every $m\in\NN$.

Given two tuples $f$ and $g$, the symbol $(f,g)$ (elsewhere, $f^\smallfrown g$) denotes its concatenation.

\begin{lemma}\label{Lemma:1}  Let $1\le p<\infty$ and $\ww\in\WW$.
Given $1<t<\infty$ and  tuples $f_{1}$  and $f_{2}$ with $\Vert f_{1}\Vert_{g} <t$, there  is a tuple $h$ such that $\Vert (h,f_{1})\Vert_g <t$ and $\Vert (f_{2},h)\Vert_{g} \ge \left(\Vert f_{2} \Vert_{g}^p +1\right)^{1/p}$. Moreover, $h$ can be chosen to be a constant-coefficient  $k$-tuple with $k$  as large as wished.
\end{lemma}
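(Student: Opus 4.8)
The plan is to build $h$ as a long block of equal, small coefficients, say $h = c\,\II_0((1,\dots,1)) $ of length $k$, and to choose $c>0$ and $k$ so that both requirements are met simultaneously. The key point is that the norm in $g(\ww,p)$ is computed by \emph{rearranging coordinates in decreasing order of absolute value and weighting by $\ww$}, via the increasing functions $\phi\in\OO$. First I would record the two competing effects. For the upper estimate on $\Vert(h,f_1)\Vert_g$: when we compute this norm, the optimal $\phi$ selects some coordinates from $h$ (each of size $c$) and some from $f_1$; since we are free to make $c$ as small as we like, the contribution of the $h$-coordinates to any admissible sum $\sum_j |a_{\phi(j)}|^p w_j$ is at most $c^p \sum_{j=1}^k w_j$, which can be made negligible — but naively this blows up with $k$, so the real content is that the $h$-coordinates, being smaller than the large coordinates of $f_1$, get paired with the \emph{tail} weights $w_j$ for large $j$, and $w_j\to 0$. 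For the lower estimate on $\Vert(f_2,h)\Vert_g$: here we want to exhibit a \emph{single} good $\phi$ that first lays down (a permutation realizing) the optimal configuration for $f_2$ using weights $w_1,\dots,w_{|f_2|}$, and then appends enough of the $h$-coordinates to pick up an extra $\ge 1$ from the weights that follow; this is where $\sum_j w_j=\infty$ enters, guaranteeing that $k$ copies of $c^p w_{\text{(shifted)}}$ can be summed to exceed $1$ once $k$ is large.

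The technical heart is reconciling these two demands with a single pair $(c,k)$. I would proceed as follows. Fix $\delta>0$ with $\Vert f_1\Vert_g^p+\delta < t^p$ (possible since $\Vert f_1\Vert_g<t$). Let $r=|f_1|$ be the length of $f_1$ and $s=|f_2|$. For the lower bound: pick the optimal $\phi$ for $f_2$, which uses weights from the index set $\{1,\dots,s\}$ (after passing to $\II(f_2)$; only finitely many coordinates of $f_2$ are nonzero), and then extend it to cover the $k$ coordinates of $h$ using indices $s+1,\dots,s+k$; this gives
\[
\Vert(f_2,h)\Vert_g^p \ge \Vert f_2\Vert_g^p + c^p \sum_{j=s+1}^{s+k} w_j,
\]
so it suffices that $c^p\sum_{j=s+1}^{s+k} w_j\ge 1$. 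For the upper bound: I claim that for \emph{every} $\phi\in\OO$,
\[
\sum_{j=1}^\infty |(h,f_1)_{\phi(j)}|^p w_j \le c^p\sum_{j=1}^{k} w_j + \Vert f_1\Vert_g^p ,
\]
but this bound is too weak; instead one argues that whenever $\phi$ selects a coordinate of $h$ it could do no worse by selecting, if available, an unused coordinate of $f_1$ of size $\ge c$ first — so for a near-optimal $\phi$ all nonzero coordinates of $f_1$ are selected, and they occupy the top weights; the $h$-coordinates then only ever meet weights $w_j$ with $j\ge j_0$ for some $j_0$ depending only on how $\phi$ interleaves, which one can push out by choosing $c$ small. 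The clean way to make this rigorous is: choose $c$ small enough that $c < $ (smallest nonzero coordinate of $f_1$ in absolute value would be ideal, but $f_1$ may have zero or tiny entries), so instead bound crudely $c^p\sum_{j=1}^\infty$ is divergent — hence the genuinely correct route is to use Lemma~\ref{Lemma:GlidingHumpWeight}.

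This is where I expect the main obstacle, and Lemma~\ref{Lemma:GlidingHumpWeight} is clearly placed to resolve it. The mechanism: writing $m = r = |f_1|$, Lemma~\ref{Lemma:GlidingHumpWeight} says $\sum_{j=m+1}^{m+k} w_j / \sum_{j=1}^{k} w_j \to 1$. So if I choose $c^p = 1/\sum_{j=m+1}^{m+k} w_j$ (for $k$ large, once this is eventually $<$ anything needed), then on the one hand $c^p\sum_{j=m+1}^{m+k} w_j = 1$, giving the lower bound with $m=s$ after noting $\sum_{j=s+1}^{s+k}w_j$ also $\sim \sum_1^k w_j$; on the other hand, for the upper bound, the worst case for $(h,f_1)$ is that $\phi$ picks all $k$ coordinates of $h$ together with all $\le r$ nonzero coordinates of $f_1$: since the $h$-coordinates are all equal, an optimal $\phi$ will rank them after any $f_1$-coordinate of size $\ge c$ and before any of size $<c$; discarding the (harmless, by $1$-subsymmetry) interleaving, the $h$-coordinates collectively meet at most the weights $w_{i+1},\dots,w_{i+k}$ for some $0\le i\le r$, so their total contribution is at most $c^p\sum_{j=1}^{k+r} w_j - c^p\sum_{j=1}^{i} w_j \le c^p \sum_{j=r+1}^{r+k} w_j \cdot (\text{ratio}\to 1)$, wait — more simply, their contribution is $\le c^p\sum_{j=i+1}^{i+k}w_j \le c^p\sum_{j=1}^{k}w_j$, and $c^p\sum_{j=1}^k w_j \to c^p\sum_{j=m+1}^{m+k}w_j \cdot (1/\text{ratio})$; by the Lemma the ratio $\sum_{j=1}^k w_j / \sum_{j=m+1}^{m+k}w_j \to 1$, so $c^p\sum_{j=1}^k w_j \to 1$ as well, hence for $k$ large it is $< 1+\delta'$ for any prescribed $\delta'$. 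Meanwhile the $f_1$-coordinates selected by $\phi$ contribute at most $\Vert f_1\Vert_g^p$ by definition of the norm (they form an admissible subsum). Combining,
\[
\Vert(h,f_1)\Vert_g^p \le \Vert f_1\Vert_g^p + (1+\delta') < t^p
\]
for $k$ large, after fixing $\delta'<t^p-\Vert f_1\Vert_g^p-\,$something; one has to be slightly careful that the bound $1+\delta'$ rather than exactly $1$ still fits under $t^p$, which it does since the inequality $\Vert f_1\Vert_g<t$ is strict. Finally, $h$ is by construction a constant-coefficient $k$-tuple with $k$ arbitrarily large, as claimed. I would write the argument by: (i) fixing the target gap from the strict inequality; (ii) invoking Lemma~\ref{Lemma:GlidingHumpWeight} with $m$ the relevant shift to choose $k$ large and $c$ accordingly; (iii) verifying the lower bound by exhibiting one explicit $\phi$; (iv) verifying the upper bound by the "rank equal coordinates together" reduction plus admissibility of subsums — the one genuinely delicate point being step (iv), ensuring the bound survives uniformly over all $\phi\in\OO$.
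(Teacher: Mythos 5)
There is a genuine gap in your upper-bound step, precisely at the point where you combine the two contributions. You bound the $h$-part of $\sum_j |(h,f_1)_{\phi(j)}|^p w_j$ by (roughly) $c^p\sum_{j=1}^k w_j\approx 1$ and the $f_1$-part by $\Vert f_1\Vert_g^p$, and conclude $\Vert (h,f_1)\Vert_g^p\le \Vert f_1\Vert_g^p+1+\delta'<t^p$ ``since the inequality $\Vert f_1\Vert_g<t$ is strict.'' Strictness only gives $t^p-\Vert f_1\Vert_g^p>0$; it does not give a gap of size at least $1$, and the lemma must hold when, say, $\Vert f_1\Vert_g=t-10^{-6}$. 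Since the lower bound on $\Vert(f_2,h)\Vert_g$ forces $c^p\sum_{j=1}^{k}w_j$ to be at least about $1$, the additive bound (max of the $h$-part) plus (max of the $f_1$-part) is irreparably too weak. A second, related misstep: the Garling norm is a supremum over \emph{order-preserving} selections, not a decreasing rearrangement, so in $(h,f_1)$ the selected $h$-coordinates necessarily occupy the earliest (largest) weights among the selected positions; they cannot be ``ranked after'' the large coordinates of $f_1$. The saving mechanism is the trade-off you mention in passing and then drop: if $\phi$ selects $i$ coordinates of $h$, these take $w_1,\dots,w_i$ and the $f_1$-coordinates only meet the shifted weights, so their contribution is not $\Vert f_1\Vert_g^p$ but $v_i:=\sup_{\phi\in\OO}\sum_j |a_{\phi(j)}|^p w_{j+i}$, which tends to $0$ as $i\to\infty$ because $\ww\in c_0$. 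One must control $\max_{0\le i\le k}\bigl(c^p\sum_{j=1}^i w_j+v_i\bigr)$, a maximum over the trade-off, not the sum of the two separate maxima.

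The paper's proof resolves exactly this. It fixes $\max\{1,\Vert f_1\Vert_g\}<s<t$, sets $\alpha_i=(s-v_i)/\sum_{j=1}^i w_j$ so that $v_i+\alpha_i\sum_{j=1}^i w_j=s$ for every $i$, and then chooses $k$ (arbitrarily large) at a running minimum of $(\alpha_i)_{i}$ subject to $\alpha_k\sum_{j=m+1}^{m+k}w_j\ge 1$, where $m=|f_2|$; taking the constant coefficient level $c=\alpha_k^{1/p}$ then gives $v_i+\alpha_k\sum_{j=1}^i w_j\le v_i+\alpha_i\sum_{j=1}^i w_j=s<t$ for every split point $i$, while the lower bound for $\Vert(f_2,h)\Vert_g$ follows as in your step (iii). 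This simultaneous calibration of the coefficient level against the block length at a minimum of $\alpha$ is the idea missing from your plan; without it no single pair $(c,k)$ of the form you propose can satisfy both requirements.
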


\begin{proof} Pick $\max\{ 1, \Vert f_{1}\Vert_g\} <s<t$.
Let $f_{1}=(a_j)_{j=1}^n$ and  $f_{2}=(b_j)_{j=1}^m$. Put $a_j=0$ for $j>n$ and define for every non-negative integer $k$
\[
v_k=\sup_{\phi\in\OO} \sum_{j=1}^\infty |a_{\phi(j)}|^p w_{j+k}.
\]
Since $\ww$ is non-increasing we have 
$v_k\le \Vert f_{1}\Vert_g^p < s$ and 
\[
v_k\le  \left(\sum_{i=1}^n |a_i|^p\right) w_{1+k}.
\]
 Hence, $\lim_k v_k=0$ since $\ww\in c_0$.
For any $k\in\NN$ put
\[
\alpha_k=\frac{s-v_k}{\sum_{j=1}^k w_j}\in (0,\infty).
\]
By Lemma~\ref{Lemma:GlidingHumpWeight}, $\lim_k \alpha_k \sum_{j=m+1}^{m+k} w_j=s$, and since $\ww\notin\ell_1$,  $\lim_k\alpha_k=0$. We infer that there is $k\in\NN$, which can be chosen larger than a given integer, such that 
\begin{itemize} 
\item[(i)] $\displaystyle \alpha_k \sum_{j=m+1}^{m+k} w_j \ge 1$ and
\item[(ii)]$\displaystyle \alpha_i\ge \alpha_k$ whenever $1\le i \le k$. 
\end{itemize}
Indeed, given $k_0\in\NN$ there is $k_1\ge k_0$ such that (i) holds for every $k> k_1$. Let $k_2>k_1$ be such that
\[
\alpha_{k_2}<\min_{1\le i \le k_1} \alpha_i.
\]
If we pick $k\in\{ 1,\dots, k_2\}$ where $\min_{1\le i \le k_2} \alpha_i$ is attained then (ii) holds, 
 and  $k>k_1$.

Let $h$ be the constant $k$-tuple whose entries are equal to $\alpha_k^{1/p}$. We have
\[
\Vert (h,f_{1})\Vert_g^p=\max_{0\le i \le k} \left( v_i+\alpha_k \sum_{j=1}^i w_j\right)
\le \max_{i \ge 0}  \left( v_i+ \alpha_i \sum_{j=1}^i w_j\right)=s<t,
\]
and 
\[
\Vert (f_{2},h)\Vert_g^p \ge\Vert f_{2} \Vert_{g}^p+\alpha_k\sum_{j=m+1}^{m+k} w_j\ge \Vert f_{2} \Vert_{g}^p+1. \qedhere
\]
\end{proof}

We will obtain  Proposition~\ref{Lemma:3}  by using the full power of Lemma~\ref{Lemma:1}.  Given $k\in\NN$ we will denote by $\vv[k]$ the positive constant-coefficient $k$-tuple whose norm in $g(\ww,p)$ is one, i.e.,
\[
\vv[k]=\frac{1}{(\sum_{j=1}^k w_j)^{1/p}}(\underbrace{1,\dots,1}_{k}).
\]

\begin{lemma}\label{Lemma:2}Let $1\le p<\infty$ and $\ww\in\WW$. Given $k_0\in\NN$, $t>1$ and a tuple $f$ with $ \Vert f\Vert_g< t$, there  is 
$k\ge k_0$ such that  $\Vert (\vv[k],f)\Vert_g < t$.
\end{lemma}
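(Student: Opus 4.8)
The plan is to derive Lemma~\ref{Lemma:2} from Lemma~\ref{Lemma:1} by a single application with a judicious choice of the auxiliary tuples. Recall that in Lemma~\ref{Lemma:1} the roles of $f_1$ and $f_2$ are asymmetric: $f_1$ is the tuple whose norm controls the size of the new hump $h$ placed \emph{in front} of it, while $f_2$ is an arbitrary tuple in front of which $h$ is appended. Here we are only asked to produce a constant-coefficient tuple $h$ with $\Vert(h,f)\Vert_g<t$, of arbitrarily large length $k\ge k_0$, and the normalization $\Vert h\Vert_g=1$ is automatic by the very definition of $\vv[k]$ once we know $h$ is a positive constant $k$-tuple. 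So the second conclusion of Lemma~\ref{Lemma:1} concerning $f_2$ is irrelevant and $f_2$ may be taken to be anything, e.g. the empty tuple (or a single zero).

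Concretely, first I would apply Lemma~\ref{Lemma:1} with the given $t$, with $f_1:=f$ (note $\Vert f_1\Vert_g=\Vert f\Vert_g<t$ as required), and with $f_2$ the trivial tuple; the ``Moreover'' clause of Lemma~\ref{Lemma:1} guarantees we may choose the resulting $h$ to be a positive constant-coefficient $k$-tuple with $k$ as large as we wish, in particular $k\ge k_0$. The first conclusion of Lemma~\ref{Lemma:1} gives $\Vert(h,f)\Vert_g<t$. Finally, since $h$ is a positive constant $k$-tuple, $h=c\,(\underbrace{1,\dots,1}_k)$ for some $c>0$, and hence $h=\Vert h\Vert_g\,\vv[k]$. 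If $\Vert h\Vert_g\le 1$ then $\Vert(\vv[k],f)\Vert_g\le\Vert(h,f)\Vert_g<t$ by the monotonicity of the lattice norm on $g(\ww,p)$ (scaling up one block coordinate-wise increases the norm), and we are done; conversely $\Vert h\Vert_g$ is genuinely close to — in fact strictly below — $t$ by construction, but in any case to be safe one normalizes $h$ before appealing to Lemma~\ref{Lemma:1}, i.e. one picks $s$ with $\max\{1,\Vert f\Vert_g\}<s<t$ and runs the argument so that $\Vert(h,f)\Vert_g\le s^{1/p}$ works out with $h=\vv[k]$ exactly; re-reading the proof of Lemma~\ref{Lemma:1}, the tuple $h$ there already has $\ell_p^p$-mass $\alpha_k\sum_{j=1}^k w_j$ which need not equal $1$, so the cleanest route is to rescale.

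Thus the real content is a bookkeeping matter: replace $h$ from Lemma~\ref{Lemma:1} by its normalization $\vv[k]=h/\Vert h\Vert_g$. Since $\Vert h\Vert_g\ge 1$ can fail, one instead observes that we are free to shrink $h$: for $0<\lambda\le 1$ the tuple $\lambda h$ still satisfies $\Vert(\lambda h,f)\Vert_g\le\Vert(h,f)\Vert_g<t$ because, coordinatewise, $|\lambda h|\le|h|$ and the $g(\ww,p)$ norm is a lattice norm. Choosing $\lambda=1/\Vert h\Vert_g$ if $\Vert h\Vert_g\ge 1$ (and $\lambda=1$ otherwise, in which case we only need $\Vert h\Vert_g\le 1$, and then $\vv[k]=h/\Vert h\Vert_g$ has norm one but larger entries — here one instead invokes Lemma~\ref{Lemma:1} with $s$ chosen so that the resulting $\alpha_k\sum_{j=1}^k w_j\ge 1$, which is exactly condition (i) in that proof), we land on $h=\vv[k]$ with $\Vert(\vv[k],f)\Vert_g<t$.

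The step I expect to require the most care is precisely this normalization/monotonicity argument: making sure that the constant tuple coming out of Lemma~\ref{Lemma:1} can be taken to have $g(\ww,p)$-norm \emph{exactly} one (equivalently, to be of the form $\vv[k]$) without destroying the estimate $\Vert(h,f)\Vert_g<t$. The two directions — when $\Vert h\Vert_g>1$ (shrink, using that the norm is a lattice norm) and when $\Vert h\Vert_g<1$ (re-choose the parameter $s$, resp. $k$, inside the proof of Lemma~\ref{Lemma:1} so that condition (i) there delivers mass at least one) — must both be addressed, but neither is hard once one notes that $g(\ww,p)$ is an order-continuous Banach lattice with respect to the unit vector basis and that the quantity $\alpha_k\sum_{j=1}^k w_j$ in the proof of Lemma~\ref{Lemma:1} tends to $s>1$.
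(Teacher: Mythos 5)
Your argument is essentially the paper's: apply Lemma~\ref{Lemma:1} with $f_1=f$ and $f_2$ trivial to get a constant $k$-tuple $h$ with $k\ge k_0$ and $\Vert(h,f)\Vert_g<t$, then pass from $h$ to $\vv[k]=h/\Vert h\Vert_g$ by $1$-unconditionality. One correction, though: the second conclusion of Lemma~\ref{Lemma:1} is \emph{not} irrelevant here. With $f_2=0$ it reads $\Vert h\Vert_g\ge(\Vert 0\Vert_g^p+1)^{1/p}=1$, which is exactly the normalization fact you spend the last two paragraphs worrying about; so the case $\Vert h\Vert_g<1$ never occurs and there is no need to reopen the proof of Lemma~\ref{Lemma:1} or tinker with the parameter $s$ and condition (i). Also note that your first-pass inequality is stated backwards: it is when $\Vert h\Vert_g\ge 1$ (not $\le 1$) that $\vv[k]=\Vert h\Vert_g^{-1}h$ has entrywise smaller modulus than $h$, whence $\Vert(\vv[k],f)\Vert_g\le\Vert(h,f)\Vert_g<t$ by $1$-unconditionality; you correct this later, but the clean route is simply to quote the $f_2$-conclusion of Lemma~\ref{Lemma:1} with $f_2=0$ and be done.
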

\begin{proof}Applying Lemma~\ref{Lemma:1} with $f_{1}=f$ and $f_{2}=0$ yields $k\ge k_0$ and a constant $k$-tuple $h$ verifying 
\[
\Vert (h, f)\Vert_g <t \text{ and } s:=\Vert h\Vert_g\ge 1.
\]
Notice that $h=s\, \vv[k]$. Using the $1$-unconditionality of the 
unit vector basis of $g(\ww,p)$ we obtain
\[
\Vert (\vv[k],f)\Vert_g = \Vert  s ^{-1}\, h , f)\Vert_g\le \Vert (h, f)\Vert_g<  t,
\]
as desired.
\end{proof}

Given a tuple $\kappa=(k_i)_{i=1}^n$ we put
\[
\vv[\kappa]=\vv[k_1,\dots, k_i,\dots k_n]=(\vv[k_1],\dots,\vv[k_i],\dots,\vv[k_n]).
\]
\begin{proposition}\label{Lemma:3}Let $1\le p<\infty$ and $\ww\in\WW$. Given
$1<t<\infty$,  $k\in \NN$, and $n\in\NN$, there is a  sequence
$\kappa=(k_i)_{i=1}^n$ such that $k_i\ge k$ for  $i=1$,\dots, $n$ and  with $\Vert\vv[\kappa]\Vert_g\le t$.
\end{proposition}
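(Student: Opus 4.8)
The plan is to prove the result by induction on $n$, in fact with the strict inequality $\Vert\vv[\kappa]\Vert_g < t$ in place of $\le t$; this is harmless for the statement (since $<t$ implies $\le t$) and is precisely what makes the inductive step go through, because Lemma~\ref{Lemma:2} requires the tuple it is fed to have norm strictly less than $t$. The engine of the proof is Lemma~\ref{Lemma:2}, which says exactly that one may prepend a normalized constant block $\vv[k_1]$, with $k_1$ as large as we please, to a tuple of norm $<t$ without pushing the norm up to $t$.

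For the base case $n=1$ I would take $k_1=k$ (any integer $\ge k$ works): then $\vv[\kappa]=\vv[k_1]$ has norm exactly $1$, which is $<t$ since $t>1$. For the inductive step, suppose the strengthened statement holds for $n$ with the same $t$ and $k$. Applying it gives a tuple $\kappa'=(k_i)_{i=2}^{n+1}$ with $k_i\ge k$ for all $i$ and $\Vert\vv[\kappa']\Vert_g<t$. View $f:=\vv[\kappa']$ as a single tuple with $\Vert f\Vert_g<t$ and invoke Lemma~\ref{Lemma:2} with $k_0=k$: we obtain an index $k_1\ge k$ with $\Vert(\vv[k_1],f)\Vert_g<t$. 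By the definition of $\vv[\cdot]$ on tuples, $(\vv[k_1],f)=(\vv[k_1],\vv[\kappa'])=\vv[k_1,k_2,\dots,k_{n+1}]$, so $\kappa:=(k_i)_{i=1}^{n+1}$ is the required sequence, completing the induction.

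I do not foresee a real obstacle: essentially all of the work has already been done in Lemmas~\ref{Lemma:1} and~\ref{Lemma:2}, and the proposition is just the assertion that Lemma~\ref{Lemma:2} can be applied $n$ consecutive times. The only points demanding attention are bookkeeping ones --- keeping the inequality strict throughout so that the hypothesis of Lemma~\ref{Lemma:2} stays satisfied, and correctly identifying the concatenation $(\vv[k_1],\vv[\kappa'])$ with $\vv[(k_i)_{i=1}^{n+1}]$.
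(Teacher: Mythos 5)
Your proof is correct and is essentially the paper's own argument: the paper likewise builds the tuple by applying Lemma~\ref{Lemma:2} repeatedly to prepend normalized constant blocks while keeping the norm strictly below $t$ (phrased there as a recursive construction of an infinite sequence $(q_i)$ followed by reversing the indices, rather than as an explicit induction on $n$). The points you flag --- keeping the inequality strict and identifying $(\vv[k_1],\vv[\kappa'])$ with $\vv[(k_i)_{i=1}^{n+1}]$ --- are exactly the right ones and are handled correctly.
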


\begin{proof}Lemma~\ref{Lemma:2} allows us to recursively construct a sequence $(q_i)_{i=1}^\infty$ in $\NN$ such that $q_1=k$, $q_{i}\ge k$ for all $i\in\NN$, and 
\[
\Vert \vv[q_n,\dots, q_i,\dots,q_1]\Vert_g < t
\]
for all $n\in\NN$. To finish the proof we just need to take $\kappa=(q_{n+1-i})_{i=1}^n$  for a given $n\in\NN$.
\end{proof}

Given an increasing sequence  $\gamma=(k_n)_{n=1}^\infty$ of natural numbers, define 
$q(\gamma)=(q_n)_{n=0}^\infty$ by $q_n=\sum_{i=1}^{n}k_n$  and $P_\gamma\colon \FF^\NN\to \FF^\NN$ by
\[
f=(a_j)_{j=1}^\infty \mapsto P_\gamma(f)=
\left(\frac{(\sum_{j=1}^{q_n-q_{n-1}} w_j)^{1/p}}{ \sum_{j=1+q_{n-1}}^{q_n}  w_j} \sum_{j=1+q_{n-1}}^{q_n} a_j w_j\right)_{n=1}^\infty.
\]
From now on we will use the convention $\sum_{i=1}^0 a_i=0$.
\begin{lemma}\label{gwpTolp}let $\gamma$ be a sequence of  natural  numbers and $t\in(0,\infty)$. Assume that, if $q(\gamma)=(q_n)_{n=0}^\infty$, 
\[ 
\frac{ \sum_{j=1+q_{n-1}}^{q_n}  w_j}{\sum_{j=1}^{q_n-q_{n-1}} w_j}\ge t 
\]
for all $n\in\NN$. Then
$
\Vert P_\gamma \colon  g(\ww,p)\to \ell_p\Vert \le t^{-1/p}.
$
\end{lemma}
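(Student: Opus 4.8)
The plan is to bound $\Vert P_\gamma(f)\Vert_{\ell_p}$ directly for an arbitrary $f=(a_j)_{j=1}^\infty\in g(\ww,p)$, the only non‑formal ingredient being the convexity of $s\mapsto|s|^p$. Writing $q(\gamma)=(q_n)_{n=0}^\infty$, I will introduce the blocks $B_n=\{1+q_{n-1},\dots,q_n\}$ and the block masses $W_n=\sum_{j\in B_n}w_j$ for $n\ge1$. Since the entries of $\gamma$ are positive integers, the intervals $B_n$ are pairwise disjoint (with union $\NN$), and $q_n-q_{n-1}$ is precisely the $n$-th entry of $\gamma$, so $\sum_{j=1}^{q_n-q_{n-1}}w_j$ is exactly the numerator in the definition of $P_\gamma$. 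With this notation the $n$-th coordinate of $P_\gamma(f)$ equals $c_n=\bigl(\sum_{j=1}^{q_n-q_{n-1}}w_j\bigr)^{1/p}\,b_n$, where $b_n=W_n^{-1}\sum_{j\in B_n}a_j w_j$ is the average of $(a_j)_{j\in B_n}$ against the probability weights $(w_j/W_n)_{j\in B_n}$.

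Next I will feed in the hypothesis. Rewriting the assumed inequality as $\sum_{j=1}^{q_n-q_{n-1}}w_j\le t^{-1}W_n$ gives
\[
|c_n|^p=\Bigl(\sum_{j=1}^{q_n-q_{n-1}}w_j\Bigr)\,|b_n|^p\le t^{-1}\,W_n\,|b_n|^p .
\]
Since $p\ge1$, Jensen's inequality applied to $|\cdot|^p$ and the probability weights $(w_j/W_n)_{j\in B_n}$ (the triangle inequality when $p=1$) yields $|b_n|^p\le W_n^{-1}\sum_{j\in B_n}|a_j|^p w_j$, whence $W_n|b_n|^p\le\sum_{j\in B_n}|a_j|^p w_j$, and therefore $|c_n|^p\le t^{-1}\sum_{j\in B_n}|a_j|^p w_j$.

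Finally I will sum over $n$ and use that $\Id\in\OO$: since the $B_n$ are pairwise disjoint,
\[
\Vert P_\gamma(f)\Vert_{\ell_p}^p=\sum_{n=1}^\infty|c_n|^p\le t^{-1}\sum_{n=1}^\infty\sum_{j\in B_n}|a_j|^p w_j\le t^{-1}\sum_{j=1}^\infty|a_j|^p w_j\le t^{-1}\Vert f\Vert_{g(\ww,p)}^p,
\]
and taking $p$-th roots gives $\Vert P_\gamma(f)\Vert_{\ell_p}\le t^{-1/p}\Vert f\Vert_{g(\ww,p)}$ for every $f$, i.e.\ $\Vert P_\gamma\colon g(\ww,p)\to\ell_p\Vert\le t^{-1/p}$. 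I do not anticipate any genuine obstacle: the argument is bookkeeping organised around the block decomposition, the one point needing care being that it is $q_n-q_{n-1}$ (not $q_n$) that enters the weight ratio, so that the hypothesis can be applied block by block; everything else is the elementary convexity estimate for $b_n$ and the monotone rearrangement of a sum.
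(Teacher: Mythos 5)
Your proof is correct and is essentially the paper's argument: your Jensen step with the probability weights $w_j/W_n$ is exactly the H\"older inequality $\bigl|\sum_{j\in B_n}a_jw_j\bigr|^p\le W_n^{p-1}\sum_{j\in B_n}|a_j|^pw_j$ used in the paper, and the remaining bookkeeping (apply the hypothesis blockwise, sum over the disjoint blocks, bound $\sum_j|a_j|^pw_j$ by $\Vert f\Vert_g^p$ via the identity in $\OO$) is identical.
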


\begin{proof}By H\"older's inequality,  
\[
\left|\sum_{j=1+q_{n-1}}^{q_n} a_j w_j \right|^p\le  \left(\sum_{j=1+q_{n-1}}^{q_n} w_j\right)^{p-1} \sum_{j=1+q_{n-1}}^{q_n} |a_j |^p w_j,
\quad n\in\NN.
\]
Thus, if $f=(a_j)_{j=1}^\infty$,
\begin{align*}
\Vert P_\gamma(f)\Vert_p^p
&\le\sum_{n=1}^\infty   \frac{\sum_{j=1}^{q_n-q_{n-1}} w_j}{ \sum_{j=1+q_{n-1}}^{q_n}  w_j} 
\sum_{j=1+q_{n-1}}^{q_n} |a_j |^p w_j\\
&\le  \frac{1}{t} \sum_{n=1}^\infty  \sum_{j=1+q_{n-1}}^{q_n} |a_j |^p w_j \\
&= \frac{1}{t} \sum_{j=1}^\infty |a_j |^p w_j \\
&\le  \frac{1}{t} \Vert f \Vert_g^p.
\end{align*}
\end{proof}

In our route to prove Theorem~\ref{ComplementedBesov} we  need to revisit a result from \cite{AAW1}.

\begin{proposition}[cf. \cite{AAW1}*{Proposition 3.2}]\label{lpdominates} Let $1\le p<\infty$, $\ww\in\WW$ and $t\in(0,\infty)$. Let $\BB=(\yy_n)_{n=1}^\infty$ be a block basic sequence of the unit vector basis of $g(\ww, p)$ such that $\Vert \yy_n\Vert \le t $ for every $n\in\NN$. Then $\BB$ is $t$-dominated by  the unit vector basis of $\ell_p$.\end{proposition}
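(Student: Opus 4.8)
The plan is to show that any block basic sequence $\BB=(\yy_n)_{n=1}^\infty$ with $\Vert\yy_n\Vert_g\le t$ satisfies $\Vert\sum_n a_n\yy_n\Vert_g \le t\Vert(a_n)_n\Vert_p$ for every finitely supported scalar sequence $(a_n)_n$. The key observation is that evaluating the $g(\ww,p)$-norm amounts to testing against increasing functions $\phi\in\OO$, and because the $\yy_n$ have successive (disjoint, order-preserving) supports, any $\phi$ applied to $\sum_n a_n\yy_n$ breaks up into its restrictions to the individual blocks.

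First I would fix $f=\sum_{n=1}^N a_n\yy_n$ and an arbitrary $\phi\in\OO$. Writing $\yy_n=\sum_{j\in A_n} c_{n,j}\ee_j$ with $A_1<A_2<\cdots<A_N$ consecutive finite intervals, the sum $\sum_{k}|(a_n c_{n,j})_{\phi(k)}|^p w_k$ decomposes according to which block $A_n$ the index $\phi(k)$ lands in. Crucially, for each fixed $n$, the indices $k$ with $\phi(k)\in A_n$ form a set on which $\phi$ restricts to an increasing function into $A_n$; and because $\ww$ is non-increasing, shifting the weights $w_k$ appearing in the $n$-th block down to $w_1,w_2,\dots$ only increases the value. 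So the $n$-th contribution is bounded by $|a_n|^p\sup_{\psi\in\OO}\sum_i |c_{n,\psi(i)}|^p w_i = |a_n|^p\Vert\yy_n\Vert_g^p\le |a_n|^p t^p$. Summing over $n$ gives $\sum_k|f_{\phi(k)}|^p w_k\le t^p\sum_{n=1}^N |a_n|^p$, and taking the supremum over $\phi$ yields $\Vert f\Vert_g\le t\,\Vert(a_n)_n\Vert_p$, which is exactly the assertion that $\BB$ is $t$-dominated by the unit vector basis of $\ell_p$.

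The one point requiring care — and the main obstacle — is the bookkeeping in the decomposition of $\phi$: one must check that the weights $w_k$ used across different blocks are all distinct (they are, since the $k$'s are distinct) and that within each block one is free to re-index the weights as $w_1,\dots,w_{|A_n\cap\operatorname{ran}\phi|}$ at the cost of only increasing the sum, which is precisely where monotonicity of $\ww$ enters; this is the same mechanism used in the proof of Lemma~\ref{Lemma:1}. Since the proof is cited as a minor variant of \cite{AAW1}*{Proposition 3.2}, I expect the authors' argument to be a short reduction to that reference rather than a from-scratch computation, but the self-contained version just sketched is routine once the block-decomposition of $\phi$ is set up correctly.
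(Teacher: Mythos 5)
Your argument is correct and is precisely the self-contained version of the proof that the paper simply outsources to \cite{AAW1}*{Proposition 3.2}: splitting an arbitrary $\phi\in\OO$ according to which block $\phi(k)$ lands in, noting that the $i$-th smallest element of each preimage set is at least $i$ so that monotonicity of $\ww$ lets you replace $w_k$ by $w_i$, and bounding the resulting inner sums by $\Vert \yy_n\Vert_g^p\le t^p$ — all of this is sound (and the minor slip of calling the supports ``intervals'' is harmless, since only the successiveness of the supports is used). No further repair is needed.
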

\begin{proof}Although \cite{AAW1}*{Proposition 3.2} tackles only the case when $\Vert y_n\Vert_g=1$,  its proof can be reproduced almost verbatim in this slightly more general setting.
\end{proof}

We are now in a position to complete the proof of Theorem~\ref{ComplementedBesov}.

\begin{proof}[Proof of Theorem~\ref{ComplementedBesov}] Let $\DD=\{(i,n) \in\NN^2 \colon 1\le i \le n\}$ and
$t=\sqrt{1+\varepsilon}$. Use Proposition~\ref{Lemma:3} and Lemma~\ref{Lemma:GlidingHumpWeight} to recursively construct $\kappa_n=(k_{i,n})_{i=1}^n$,  $n\in\NN$, verifying
 $\Vert \vv[\kappa_n]\Vert_g \le t$ for all $n\in\NN$ and
 \[
\frac{\sum_{j=1+m_n}^{m_n+k_{i,n}} w_j}{\sum_{j=1}^{k_{i,n}} w_j}\ge t^{-p}
\]
for all $(i,n)\in \DD$, where
$m_n=\sum_{r=1}^{n-1}\max_{1\le i \le k} k_{i,r}$ for $n\in\NN\cup\{0\}$.

For $n\in\NN$ and $0\le i \le n$ set
\[
m_{i,n}=\sum_{r=1}^{n-1} \sum_{d=1}^r k_{d,r}+ \sum_{d=1}^i k_{d,n}
\]
 and  for $(i,n)\in\DD$ put
\begin{itemize}
\item $J_{i,n}=\{n\in\NN \colon 1+m_{i-1,n}\le j \le m_{i,n}\}$,
\item $\yy_{i,n}= \left(\sum_{j=1}^{k_{i,n}} w_j\right)^{-1/p} \sum_{j\in J_{i,n}} \ee_j$,
\item
$
\yy_{i,n}^*((a_j)_{j=1}^\infty)= \left(\sum_{j=1}^{k_{i,n}} w_j\right)^{1/p} ( \sum_{j\in J_{i,n} }  w_j )^{-1} \sum_{j\in J_{i,n}} w_j a_j.
$
 \end{itemize}

Note that  $(J_{i,n})_{(i,n)\in\DD}$ is a partition $\NN$, and it is straightforward to check that $(\yy_{i,n}, \yy_{i,n}^*)_{(i,n)\in\DD}$ is a biorthogonal system. Thus, if we define
$P\colon \FF^\NN \to \prod_{n=1}^\infty \FF^n$ by
\[
 P(f) = \left(\left( \yy_{i,n}^*(f) \right)_{i=1}^n\right)_{n=1}^\infty, \quad f\in\FF^\NN,
\]
and $S\colon \prod_{n=1}^\infty \FF^n \to \FF^\NN $ by
\[
f=((a_{i,n})_{i=1}^n)_{n=1}^\infty\in  \prod_{n=1}^\infty \FF^n  \mapsto S(f)=\sum_{n=1}^\infty \sum_{i=1}^n a_{i,n}\, \yy_{i,n}
\]
we have $P\circ S=\Id_{\prod_{n=1}^\infty \FF^n}$. Thus, the proof will be over once we show that
\[
\max\{\Vert P \colon g(\ww,p) \to (\oplus_{n=1}^\infty \ell_\infty^n)_p\Vert,
\Vert S  \colon  (\oplus_{n=1}^\infty \ell_\infty^n)_p \to g(\ww,p) \Vert\} \le t.
\]

Given $\alpha=(i_n)_{n=1}^\infty$ with $1\le i_n \le n$,  let $\phi_\alpha\in\OO$ be defined by $\phi_\alpha(\NN)=K_\alpha:=\cup_{n=1}^\infty J_{i_n,n}$. Consider the operator
\[
V_\alpha\colon\FF^\NN\to \FF^\NN, \quad f=(a_j)_{j=1}^\infty \mapsto V_\alpha(f)=(a_{\phi_\alpha(j)})_{j=1}^\infty.
\] 
That is, $V_\alpha(f)$ is the sequence obtained by removing from $f$  its coefficients outside $K_\alpha$.
The $1$-subsymmetry of the unit vector basis of $g(\ww,p)$ yields $\Vert V_\alpha\colon g(\ww,p)\to g(\ww,p)\Vert \le 1$.  
Put $q_n(\alpha)=q_n:=\sum_{r=1}^n k_{i_r,r}$ for $n\in\NN\cup\{0\}$. Since $\ww$ is non-increasing,
\[
\frac{\sum_{j=1+q_{n-1}}^{q_n} w_j}{\sum_{j=1}^{k_{i_n,n}} w_j}
\ge
\frac{\sum_{j=1+m_n}^{m_n+k_{i_n,n}} w_j}{\sum_{j=1}^{k_{i_n,n}} w_j}
\ge t^{-p},\quad n\in\NN.
\]
Therefore, by Lemma~\ref{gwpTolp},
\[
\left(\sum_{n=1}^\infty |\yy_{i_n,n}^*(f)|^p\right)^{1/p}\le t \, \Vert V_\alpha (f)\Vert_g\le  t \, \Vert f\Vert_g, \quad f\in\FF^\NN.
\]
Taking the supremum on all possible choices of $(i_n)_{n=1}^\infty$ we obtain 
\[
\Vert P(f)\Vert_{ (\oplus_{n=1}^\infty \ell_\infty^n)_p}
=\left(\sum_{n=1}^\infty  (\max_{1\le i \le n}|\yy_{i,n}^*(f)|)^p\right)^{1/p}
\le  t\,  \Vert f\Vert_g, \quad f\in\FF^\NN.
\]
Let $\yy_n=\sum_{i=1}^n\yy_{i,n}$ for $n\in\NN$. We have that $(\yy_n)_{n=1}^\infty$ is a block basic sequence of the unit  vector system and that $\yy_n=\II_{m_{i-1,n}} (\vv[\kappa_n])$. Consequently, $\Vert \yy_n\Vert_g \le t$ for all $n\in\NN$.
If $f=((a_{i,n})_{i=1}^n)_{n=1}^\infty\in  \prod_{n=1}^\infty \FF^n$, invoking Proposition~\ref{lpdominates} and the $1$-unconditionality of the unit vector system we obtain 
\begin{align*}
\Vert S(f)\Vert_g 
&\le \left\Vert  \sum_{n=1}^\infty \left(\sup_{1\le i \le n} |a_{i,n}|\right)\left( \sum_{i=1}^n \yy_{i,n}\right)\right\Vert_g
\\
&=\left\Vert  \sum_{n=1}^\infty \left(\max_{1\le i \le n} |a_{i,n}|\right)  \yy_n \right\Vert_g
\\
&\le t \left(\sum_{n=1}^\infty  \left( \max_{1\le i \le n} |a_{i,n}|\right)^p\right)^{1/p}\\
&=t\,  \Vert f\Vert_{ (\oplus_{n=1}^\infty \ell_\infty^n)_p},
\end{align*}
as desired. \end{proof}

\section{Conditional bases in Garling sequence spaces}\label{ConditionalBases}

\noindent In 1964, Pe\l czy\'nski and Singer proved that every Banach space with a basis has a \textit{conditional} (i.e., not unconditional) basis \cite{PelSin1964}.  Thus in order to get a more accurate  information on a given space by means of conditional bases, one needs to restrict the discussion on their existence by imposing certain distinctive properties. 

One way to    specify a special property on  conditional bases is precisely by quantifying their conditionality.  In order to do that we consider the sequences $(k_m[\BB,\XX])_{m=1}^{\infty}$  and  $(L_m[\BB,\XX])_{m=1}^{\infty}$ defined   by
\begin{align*}
k_m[\BB]=k_m[\BB,\XX]&=\sup\left\{ \frac{ \Vert S_A(f)\Vert}{ \Vert f \Vert } \colon |A|\le m,  \, A\subseteq\NN\right\},\\
L_m[\BB]=L_m[\BB,\XX]&=\sup\left\{ \frac{ \Vert S_A(f)\Vert}{ \Vert f \Vert } \colon \supp(f)\subseteq [1,m], \, A\subseteq\NN\right\}.
\end{align*}
Indeed, since  a basis $\BB$ is unconditional if and only if $\sup_m L_m[\BB]<\infty$ or
$\sup_m k_m[\BB]<\infty$, the growth of any of those sequences can be interpreted as a measure of the conditionality of $\BB$.
The gauge $k_m[\BB]$  does not depend on the way in which the vectors of the basis are arranged, and so  is arguably more natural than  $L_m[\BB]$. However, the sequence $L_m[\BB]$  introduced in \cite{AAWo} 
has  shown to be in some settings a more accurate tool for studying conditional bases (see also \cite{AADK}).
Note that  $L_m[\BB]\le k_m[\BB]$.

For every basis $\BB$ in a Banach space $\XX$ one always has the estimate $k_m[\BB,\XX]\lesssim m$, for $m\in \NN$. Conversely, it is known (see \cite{AAWo}*{Theorem 3.5}) that  $\XX$ is not superreflexive if and only if there is a basic sequence $\BB^{\prime}$ in $\XX$ with $m\lesssim L_m[\BB^{\prime},\XX]$ for $m\in\NN$. 
Hence it is natural to wonder if, being non-superreflexive, $g(\ww,p)$ will possess not only a basic sequence but a basis of the whole space with this property.
 The answer is positive as we next show.

\begin{proposition}\label{ConditionalSchBases} Let $1\le p<\infty$ and $\ww\in\WW$. Then $g(\ww,p)$ has a (conditional) basis $\BB$ with $L_m[\BB]\approx m$ for $m\in\NN$.
\end{proposition}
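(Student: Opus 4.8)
The plan is to transfer a known conditional basis from the model space $(\oplus_{n=1}^\infty \ell_\infty^n)_p$ to $g(\ww,p)$ using the complemented embedding furnished by Theorem~\ref{ComplementedBesov}, and then patch in the canonical unconditional basis on a complement. First I would recall the classical construction (due essentially to the theory of conditional bases in spaces containing $\ell_\infty^n$'s uniformly, or the summing-basis type construction inside each $\ell_\infty^n$ block): on each finite block $\ell_\infty^n$ one replaces the unit vector basis by the ``summing basis'' $(s_i)_{i=1}^n$, where $s_i = \ee_1 + \cdots + \ee_i$, whose coordinate projections $S_{[1,m]}$ on an $m$-dimensional initial segment have norm comparable to $1$ but whose projections onto arbitrary $A$ can have norm of order $m$; stacking these blocks together one obtains a basis $\BB_0$ of $(\oplus_{n=1}^\infty \ell_\infty^n)_p$ with $L_m[\BB_0, (\oplus_{n=1}^\infty \ell_\infty^n)_p]\approx m$. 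The upper bound $L_m[\BB_0]\lesssim m$ is automatic since $k_m[\BB]\lesssim m$ for any basis; the lower bound $L_m[\BB_0]\gtrsim m$ comes from testing on a single block $\ell_\infty^n$ with $n\ge m$: take $f=\sum_{i=1}^m s_i$ reorganized so that $\|f\|$ is bounded while an appropriate coordinate projection recovers a vector of norm $\approx m$.

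Next I would use Theorem~\ref{ComplementedBesov}: fix $\varepsilon>0$ and obtain a sublattice $Z\subseteq g(\ww,p)$ that is $(1+\varepsilon)$-isomorphic to $(\oplus_{n=1}^\infty\ell_\infty^n)_p$ and $(1+\varepsilon)$-complemented via a projection $\pi$. Transporting $\BB_0$ through the isomorphism gives a basic sequence $\BB_1$ in $g(\ww,p)$, spanning $Z$, with $L_m[\BB_1, g(\ww,p)]\approx m$: the lower estimate survives because the isomorphism distorts norms by at most a constant and $Z$ is complemented (so coordinate projections relative to $\BB_1$, extended by $0$ off $Z$, are uniformly bounded perturbations of genuine projections on $g(\ww,p)$), while the upper estimate is again the universal bound $k_m\lesssim m$. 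The point is that for $\supp(f)\subseteq[1,m]$ with respect to $\BB_1$ we have $f\in Z$, so $S_A[\BB_1](f)$ computed inside $Z$ and then re-embedded is, up to the isomorphism constant, the corresponding projection for $\BB_0$, hence bounded by a constant multiple of $m\|f\|$.

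Then I would extend $\BB_1$ to a basis of all of $g(\ww,p)$. Using the complemented decomposition $g(\ww,p)= Z\oplus W$ with $W=\ker\pi$, I would choose any basis (even an unconditional one, e.g.\ coming from the block structure of the unit vector system) $\BB_2$ of $W$ and take $\BB = \BB_1 \cup \BB_2$, ordered so that the vectors of $\BB_1$ come first in a way compatible with the $\ell_p$-block structure — more carefully, interleave so that the first $m$ vectors of $\BB$ always contain the first $\lfloor m/2\rfloor$ vectors of $\BB_1$. Because the decomposition is an $\ell_p$-type (in particular unconditional) decomposition into the two pieces, the conditionality constants of $\BB$ dominate (up to a constant) those of $\BB_1$ restricted to $Z$, giving $L_m[\BB]\gtrsim L_{\lfloor m/2\rfloor}[\BB_1]\gtrsim m$; the matching upper bound $L_m[\BB]\lesssim m$ holds for any basis. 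I would remark that $Z\oplus W$ being a direct sum of two Banach spaces each with a basis does yield a basis of the sum once an order is fixed, which is standard.

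The main obstacle I anticipate is the bookkeeping in the last step: one must order the combined basis $\BB$ so that both (i) it is genuinely a Schauder basis of $g(\ww,p)$ (the partial sum projections are uniformly bounded), which is delicate when splicing two bases, and (ii) the initial segments still ``see'' enough of $\BB_1$ to force the lower bound $L_m[\BB]\gtrsim m$. The cleanest route is probably to keep all of $\BB_1$ as an initial block in a generalized sense — since $Z$ is complemented, $g(\ww,p)\cong Z \oplus W$ and one can order $\BB$ as: all of $\BB_1$ first (which is already a Schauder basis of $Z$ with the inherited order), then all of $\BB_2$; verifying this concatenation is a Schauder basis of the $\ell_p$-sum is routine, and then $L_m[\BB]\ge L_m[\BB_1, Z]\cdot(1+\varepsilon)^{-1}\gtrsim m$ directly, with no interleaving needed. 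I would also need to double-check that the ``summing basis'' construction genuinely achieves $L_m\gtrsim m$ rather than merely $k_m\gtrsim m$, i.e.\ that the witnessing vectors can be taken with support in an initial segment $[1,m]$ of $\BB_0$; this is arranged by placing the relevant $\ell_\infty^n$ block (with $n\ge m$) first in the ordering of $\BB_0$, or by noting that within any single block the summing basis already has $L_m \approx m$ for $m\le n$.
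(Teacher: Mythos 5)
Your overall strategy---summing bases inside the $\ell_\infty^n$ blocks of $(\oplus_{n=1}^\infty\ell_\infty^n)_p$, transported into $g(\ww,p)$ via the complemented copy furnished by Theorem~\ref{ComplementedBesov}---is the right one, and the first two steps are sound. The gap is in the extension step. Writing $g(\ww,p)=Z\oplus W$ with $W=\ker\pi$, you propose to ``choose any basis $\BB_2$ of $W$,'' but you have not shown that $W$ has a Schauder basis. It is not known in general that a complemented subspace of a space with a basis has a basis, and nothing in your argument (nor in the explicit description of $W$ as the common kernel of the functionals $\yy_{i,n}^*$) establishes one here without further work; asserting its existence is the missing idea, not a bookkeeping detail. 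A second, more minor, defect is the ordering: ``all of $\BB_1$ first, then all of $\BB_2$'' has order type $\omega+\omega$ and so is not an admissible enumeration of a Schauder basis; you must interleave, as you briefly consider, and then the lower bound $L_m[\BB]\gtrsim m$ has to be extracted from the interleaved enumeration (routine, but it is the version you would actually have to prove).

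The paper sidesteps both issues with a Pe\l czy\'nski-decomposition argument: it builds the conditional basis not on $g(\ww,p)$ directly but on the external sum $\YY=\XX\oplus g(\ww,p)$, where $\XX=(\oplus_{n=1}^\infty\ell_\infty^{2^n})_p$, taking the summing bases on the blocks of $\XX$ together with the unit vector basis of $g(\ww,p)$ (so a basis of the unknown complement $W$ is never needed), and then uses $\XX\oplus(\oplus_{n=1}^\infty\ell_\infty^{n})_p\approx(\oplus_{n=1}^\infty\ell_\infty^{n})_p$ together with Theorem~\ref{ComplementedBesov} to conclude $\YY\approx g(\ww,p)$. You could repair your argument along the same lines: from $g(\ww,p)\approx Z\oplus W$ and $Z\approx Z\oplus Z$ deduce $g(\ww,p)\approx Z\oplus g(\ww,p)$, and then interleave $\BB_0$ (on the $Z$ summand) with the canonical unconditional basis of the $g(\ww,p)$ summand; this keeps every object explicit and avoids the complement entirely.
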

\begin{proof}It is known that  the  \textit{summing system} $(\sss_j)_{j=1}^\infty$ given by
\[
\sss_j=\sum_{k=1}^j \ee_j 
\]
 is a basis for $c_0$  such that $\spn(\sss_j \colon 1\le j\le n)=\ell_\infty^n$ for all $n\in\NN$ with $L_m[\SSB]\approx m$ for $m\in\NN$
 (see, e.g., \cite{AADK}*{Lemma 4.9}). Hence, $\BB_0:=\bigoplus_{n=1}^\infty  (\sss_j)_{j=1}^{2^n}$ is a basis  for $\XX:=(\oplus_{n=1}^\infty \ell_\infty^{2^n})_p$ with $L_m[\BB_0]\approx m$ for $m\in\NN$
 (see \cite{AADK}*{Lemma 2.3}). Then $\BB:=\BB_0\oplus (\ee_j)_{j=1}^\infty$ is a basis for $\YY=\XX\oplus g(\ww,p)$ with $L_m[\BB]\approx m$ for $m\in\NN$ (see \cite{AADK}*{Lemma 2.2}). Finally, since 
 $\XX\oplus (\oplus_{n=1}^\infty \ell_\infty^{n})_p \approx (\oplus_{n=1}^\infty \ell_\infty^{n})_p$
 (see, e.g.,  \cite{AA2017}*{Appendix 4.1}), Theorem~\ref{ComplementedBesov} yields $\YY\approx g(\ww,p)$.
\end{proof}
 
 Now we will look into conditional bases in Garling sequence spaces that have  some special features in relation to the optimality of the greedy algorithm.  For the convenience of the reader we recall the relevant concepts from approximation theory, thus making our exposition self-contained.

Let $\BB=(\xx_{n})_{n=1}^{\infty}$ be a basis for a Banach space $\XX$. A finite set $G\subseteq \NN$ is said to be a \textit{greedy set} for 
$f=\sum_{n=1}^\infty a_n \, \xx_n \in\XX$ if  $|a_n|\ge |a_j|$ whenever $n\in G$ and $j\in\NN\setminus G$.  A \textit{greedy sum} will be a coordinate projection on a greedy set.
A basis $\BB$ is said to be \textit{almost greedy} if 
the greedy sums (essentially) provide  the optimal approximations amongst coordinate projections, that is, there is a constant $C<\infty$ such that whenever  $G$ is a greedy set for $f\in \XX$ and $|G|=|A|$,
\[
\Vert f-S_G(f)\Vert \le C \Vert f - S_A(f)\Vert.
\]
Almost greedy bases enjoy the property  of being \textit{democratic}  (see \cite{DKK2003}*{Theorem 3.3}), i.e.,
 there is a sequence  $(\lambda_m)_{m=1}^\infty$ such that for any finite subset $A$ of $\NN$,
\[
\left\Vert \sum_{j\in A} \varepsilon_j \xx_j\right\Vert \approx \lambda_{|A|}.
\]
In this case, $(\lambda_m)_{m=1}^\infty$ is equivalent to the \textit{fundamental function} $\varphi_m[\BB,\XX]$
of $\BB$ defined by
\[
\varphi_m[\BB,\XX]=\sup_{|A|\le m} \left\Vert \sum_{j\in A} \xx_j\right\Vert, \quad m\in\NN.
\]
Note that the unit vector basis $\EE=(\ee_j)_{j=1}^\infty$ of $g(\ww,p)$  verifies 
\[
\left\Vert \sum_{j\in A} \ee_j\right\Vert_g=\left(\sum_{j=1}^{|A|} w_j\right)^{1/p}
\] for all  $A\subseteq\NN$ finite and so 
\[
\varphi_m[\EE,g(\ww,p)]=\left(\sum_{j=1}^m w_j\right)^{1/p},\quad m\in\NN.\]
 
When a basis $\BB$ of a Banach space $\XX$ is almost greedy, the size of the members of the sequence $(k_m[\BB, \XX])_{m=1}^{\infty}$ is controlled by a slowly growing function to the extent that  (see \cite{DKK2003}*{Lemma 8.2}) 
\begin{equation}\label{DKKInequality}
k_m[\BB]\lesssim \log m, \quad m\ge 2.
\end{equation}
Moreover, by \cite{AAGHR2015}*{Theorem 1.1} this inequality is optimal only if $\XX$ is not superreflexive.

We close with a new addition to the subject   of  finding (non-super\-reflexive) spaces possessing almost greedy conditional bases for which the estimate \eqref{DKKInequality} is optimal, i.e., $k_m[\BB]\approx \log m$ for $m\ge 2$. This  topic  was  initiated by Garrig\'os et al.\ in \cite{GHO2013} and has been given continuity  through several papers and authors
(see \cites{GW2014,AAGHR2015,AAWo,BBGHO2018,AADK}).
  
 The proof of Theorem~\ref{AGC} leans on regularity properties of weights  whose definitions we refresh. 
 
 A weight $\ww=(w_{j})_{n=1}^{\infty}$  is said to be \textit{bi-regular} if both $\ww$ and its 
 \textit{conjugate weight} $\ww^*=(1/(j w_j))_{j=1}^\infty$  are regular.
 Following  \cite{DKKT2003},  a weight $(\lambda_m)_{m=1}^\infty$  is said to have the \textit{lower regularity property} (LRP for short) if there is a positive integer $b$ such that
 \[
 2\lambda_m \le \lambda_{bm}. 
\quad
 m\in\NN,
 \] 
A weight $(\lambda_m)_{m=1}^\infty$ is said to have the \textit{upper regularity property} (URP for short) if there is an integer $b\ge 3$ such that
\[
 \lambda_{b m}\le \frac{b}{2} \lambda_m,\quad m\in\NN.
\] 

\begin{theorem}\label{AGC}Let  $1\le p<\infty$ and $\ww\in\WW$.
\begin{itemize}
\item[(a)] There is an almost greedy basis $\BB$ for $g(\ww,p)$ with fundamental function equivalent to $(m^{1/p})_{m=1}^\infty$ such that
$L_m[\BB]\approx \log m$ for $m\ge 2$.

\item[(b)] If the weight $\ww$ is bi-regular there is an almost greedy basis $\BB$ for $g(\ww,1)$ with fundamental function equivalent to 
$(\sum_{j=1}^m w_j)_{m=1}^\infty$ such that
$L_m[\BB]\approx \log m$ for $m\ge 2$.

\item[(c)] If $\ww$ is regular and $p>1$, there is an almost greedy basis $\BB$ for $g(\ww,p)$ with fundamental function equivalent to the sequence
$((\sum_{j=1}^m w_j)^{1/p})_{m=1}^\infty$ such that
$L_m[\BB]\approx \log m$ for $m\ge 2$.

\end{itemize}
\end{theorem}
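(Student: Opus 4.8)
The common skeleton is this. By Theorem~\ref{ComplementedBesov} the space $(\oplus_{n=1}^\infty\ell_\infty^n)_p$ sits complemented in $g(\ww,p)$ and is isomorphic to its $\ell_p$-sum with itself; hence Pe\l czy\'nski's decomposition method --- applied verbatim as in the proof of Proposition~\ref{ConditionalSchBases} --- yields
\[
g(\ww,p)\ \approx\ g(\ww,p)\oplus\Big(\oplus_{n=1}^\infty\ell_\infty^n\Big)_p .
\]
In particular $\ell_\infty$ is finitely representable in $g(\ww,p)$, so $g(\ww,p)$ is non-superreflexive; and it is this non-superreflexivity --- localized in the $(\oplus_n\ell_\infty^n)_p$ summand --- that, by \cite{AAGHR2015}*{Theorem~1.1}, makes the optimal conditionality growth $L_m\approx\log m$ attainable by an almost greedy basis. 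On top of this isomorphic fact one runs the constructions developed in \cites{GHO2013,AAGHR2015,AAWo,BBGHO2018,AADK} for producing conditional almost greedy bases with a prescribed fundamental function and $k_m\approx\log m$ on spaces modelled on the finite cubes $\ell_\infty^n$, together with the amalgamation lemmas for (quasi-)greedy bases already invoked in the proof of Proposition~\ref{ConditionalSchBases} (see \cite{AADK}*{Lemmas 2.2 and 2.3}). In every case, once a genuinely conditional almost greedy basis $\BB$ of $g(\ww,p)$ has been assembled, the bound $L_m[\BB]\le k_m[\BB]\lesssim\log m$ is free from \eqref{DKKInequality}, and the reverse inequality $\log m\lesssim L_m[\BB]$ follows by testing coordinate projections on vectors supported in the ``conditional part'', which is placed along a subset of $\NN$ of positive density.

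For part (a) one applies this scheme with target fundamental function $(m^{1/p})_{m=1}^\infty$. This is the fundamental function of the canonical basis of $(\oplus_n\ell_\infty^n)_p$ and of the complemented copies of $\ell_p$ that $g(\ww,p)$ contains by Theorem~\ref{PreviousResults}(d); moreover $(m^{1/p})_{m=1}^\infty$ always has the LRP (and, for $p>1$, the URP), so no hypothesis on $\ww$ is needed and the cited machinery produces an almost greedy basis of $g(\ww,p)$ with this fundamental function and $L_m\approx\log m$. For parts (b) and (c) the target is $\varphi_m:=\big(\sum_{j=1}^m w_j\big)^{1/p}$, the fundamental function of the canonical basis $\EE$ of $g(\ww,p)$; here the regularity hypotheses enter through the elementary equivalence
\[
\text{$\ww$ bi-regular (if $p=1$) / regular (if $p>1$)}\quad\Longleftrightarrow\quad\text{$\varphi$ has both the LRP and the URP,}
\]
and, granting that $\varphi$ has the LRP and the URP, the same machinery --- fed with a suitable complemented ``$\ell_\infty$-enriched'' sublattice of $g(\ww,p)$ whose democratic geometry reproduces the Garling fundamental function $\varphi$ (obtained by a weighted variant of the construction proving Theorem~\ref{ComplementedBesov}, in which the $\ell_\infty^n$ blocks are arranged along the canonical basis so that their outer norm mimics the Garling norm rather than the $\ell_p$ norm) --- delivers the desired almost greedy basis with fundamental function $\varphi$ and $L_m\approx\log m$.

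I expect the crux to be the construction for (b) and (c). In part (a) the only non-superreflexive piece available, $(\oplus_n\ell_\infty^n)_p$, happens to carry exactly the wanted fundamental function $m^{1/p}$; but in (b) and (c) that piece --- whose cotype, when $p>2$, forbids democratic (hence almost greedy) bases with fundamental function below $m^{1/p}$ --- cannot host a basis with the strictly smaller target $\varphi$, so one is forced to exhibit a genuinely new complemented subspace of $g(\ww,p)$ that is at once $\ell_\infty$-enriched (to retain non-superreflexivity) and of Garling-type democratic geometry (to realize $\varphi$). Carrying this out, and thereby verifying the URP/LRP inputs to the conditional-basis machinery, is precisely where the (bi-)regularity of $\ww$ is indispensable; the remaining points --- preservation of quasi-greediness and of democracy under the amalgamations, and the bookkeeping that turns the local $L_m\approx\log m$ into a global one --- are routine given the cited lemmas.
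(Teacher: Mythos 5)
Your treatment of part (a) is essentially the paper's route: the conditional ``seed'' is the basis of $g(\ww,p)$ with $L_m\approx m$ supplied by Proposition~\ref{ConditionalSchBases} (itself resting on Theorem~\ref{ComplementedBesov}), and the almost greedy basis with fundamental function $(m^{1/p})_{m=1}^\infty$ and $L_m\approx\log m$ comes from the machinery of \cite{AADK}*{Theorem 4.1} applied with $\Sym=\ell_p$, followed by the isomorphism $\ell_p\oplus g(\ww,p)\approx g(\ww,p)$.

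For parts (b) and (c), however, there is a genuine gap, and it stems from a misreading of what the \cite{AADK} machinery requires. You argue that since $(\oplus_{n}\ell_\infty^n)_p$ cannot carry the target fundamental function $\varphi_m=(\sum_{j=1}^m w_j)^{1/p}$, one must manufacture a new complemented, ``$\ell_\infty$-enriched'' sublattice of $g(\ww,p)$ with Garling-type democratic geometry, via an unspecified ``weighted variant'' of the proof of Theorem~\ref{ComplementedBesov}. That construction is never carried out --- you yourself flag it as the crux --- so as written (b) and (c) are not proved. It is also unnecessary: in the amalgamation of \cite{AADK}*{Theorem 4.1 and Remark 4.2} the fundamental function of the resulting almost greedy basis is inherited from the subsymmetric summand $\Sym$, not from the conditional summand, which only has to contribute a basis with $L_m\approx m$. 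The paper therefore simply takes $\Sym=g(\ww,p)$ itself --- its unit vector basis is $1$-subsymmetric with fundamental function exactly $\varphi$ --- pairs it with the conditional basis from Proposition~\ref{ConditionalSchBases}, and concludes via $g(\ww,p)\oplus g(\ww,p)\approx g(\ww,p)$. The only role of the (bi-)regularity hypotheses is to verify that $\varphi$ has the LRP and the URP, as \cite{AADK}*{Remark 4.2} demands; you correctly locate where the hypotheses enter, but you assert the LRP/URP as an ``elementary equivalence'' without proof, whereas the paper derives the needed implication from \cite{AAW2}*{Proposition 2.5} and \cite{AA2015}*{Lemma 2.12} (the URP for $p>1$ under mere regularity of $\ww$ uses that $m^{-1}\sum_{j=1}^m w_j$ is non-increasing). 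None of this verification appears in your sketch.
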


\begin{proof} Applying \cite{AADK}*{Theorem 4.1} with $\Sym=\ell_p$, and taking into account
Theorem~\ref{ConditionalSchBases}, gives a basis $\BB$ as claimed in (a) for the  Banach space $\ell_p \oplus g(\ww,p)\approx g(\ww,p)$.

Assume that $\ww$ is regular. Then, by \cite{AAW2}*{Proposition 2.5},  its primitive weight $(\sum_{j=1}^m w_j)_{m=1}^\infty$ has the LRP. Therefore $((\sum_{j=1}^m w_j)^{1/p})_{m=1}^\infty$ also has the LRP for any $1\le p<\infty$. Note also that $m w_m \approx \sum_{j=1}^n w_j$ and so the conjugate weight $\ww^*$ is equivalent to $1/(\sum_{j=1}^m w_j)_{m=1}^\infty$. Consequently, \cite{AA2015}*{Lemma 2.12 (iii)} yields that  $(\sum_{j=1}^m w_j)_{m=1}^\infty$ has the URP whenever $\ww^*$ is bi-regular. Since
$m^{-1}(\sum_{j=1}^m w_j)_{m=1}^\infty$ is non-increasing,  in the case when $p>1$, \cite{AA2015}*{Lemma 2.12 (ii)}  yields 
$((\sum_{j=1}^m w_j)^{1/p})_{m=1}^\infty$ has the URP.

Under the assumptions in  both  (b) and (c),  applying  \cite{AADK}*{Remark 4.2} with
$\Sym=g(\ww,p)$ gives a basis as desired for the Banach space $g(\ww,p) \oplus g(\ww,p)\approx g(\ww,p)$.
\end{proof}


\begin{bibsection}
\begin{biblist}

\bib{AA2015}{article}{
 author={Albiac, F.},
 author={Ansorena, J.~L.},
 title={Lorentz spaces and embeddings induced by almost greedy bases in
 Banach spaces},
 journal={Constr. Approx.},
 volume={43},
 date={2016},
 number={2},
 pages={197--215},
}

\bib{AA2017}{article}{
 author={Albiac, F.},
 author={Ansorena, J.~L.},
 title={Isomorphic classification of mixed sequence spaces and of Besov
 spaces over $[0,1]^d$},
 journal={Math. Nachr.},
 volume={290},
 date={2017},
 number={8-9},
 pages={1177--1186},
}

\bib{AADK}{article}{
 author={Albiac, F.},
 author={Ansorena, J.~L.},
 author={Dilworth, S.~J.},
 author={Kutzarova, Denka},
 title={Building  highly conditional almost greedy and quasi-greedy bases in Banach spaces},
 journal={J. Funct. Anal.},
 date={2018},
 doi={10.1016/j.jfa.2018.08.015},
 }
 
 \bib{AAGHR2015}{article}{
 author={Albiac, F.},
 author={Ansorena, J.~L.},
 author={Garrig{\'o}s, G.},
 author={Hern{\'a}ndez, E.},
 author={Raja, M.},
 title={Conditionality constants of quasi-greedy bases in super-reflexive
 Banach spaces},
 journal={Studia Math.},
 volume={227},
 date={2015},
 number={2},
 pages={133--140},
}

\bib{AALW}{article}{
 author={Albiac, F.},
 author={Ansorena, J.~L.},
 author={Leung, D.},
 author={Wallis, B.},
 title={Optimality of the rearrangement inequality with applications to
 Lorentz-type sequence spaces},
 journal={Math. Inequal. Appl.},
 volume={21},
 date={2018},
 number={1},
 pages={127--132},
 }

\bib{AAW1}{article}{
author={Albiac, F.},
author={Ansorena, J.L.},
author={Wallis, B.},
title={Garling sequence spaces},
journal={J. London Math. Soc.},
volume={98},
 date={2018},
 number={2},
 pages={204--222}, }
 
 \bib{AAW2}{article}{
 author={Albiac, F.},
 author={Ansorena, J.~L.},
 author={Wallis, B.},
 title={1-greedy renormings of Garling sequence spaces},
 journal={J. Approx. Theory},
 volume={230},
 date={2018},
 pages={13--23},
}

\bib{AAWo}{article}{
 author={Albiac, F.},
 author={Ansorena, J.~L.},
 author={Wojtaszczyk, P.},
 title={Conditionality constants of quasi-greedy bases in non-superreflexive Banach space},
 journal={Constr. Approx.},
 doi= {10.1007/s00365-017-9399-x},
}

\bib{AlbiacKalton2016}{book}{
 author={Albiac, F.},
 author={Kalton, N.~J.},
 title={Topics in Banach space theory, 2nd revised and updated edition},
 series={Graduate Texts in Mathematics},
 volume={233},
 publisher={Springer International Publishing},
 date={2016},
 pages={xx+508},
 }
 
 \bib{Altshuler1975}{article}{
 author={Altshuler, Z.},
 title={Uniform convexity in Lorentz sequence spaces},
 journal={Israel J. Math.},
 volume={20},
 date={1975},
 number={3-4},
 pages={260--274},
 }

 \bib{BBGHO2018}{article}{
author={Bern\'a, P.},
 author={Blasco, O.},
 author={Garrig{\'o}s, G.},
 author={Hern{\'a}ndez, E.},
 author={Oikhberg, T.},
 title={Embeddings and Lebesgue-type inequalities for the greedy algorithm in Banach spaces},
 journal={Constr. Approx.},
 date={2018},
 doi={10.1007/s00365-018-9415-9},
 }
 
 \bib{ACL73}{article}{
   author={Altshuler, Z.},
   author={Casazza, P.~G.},
   author={Lin, B.~L.},
   title={On symmetric basic sequences in Lorentz sequence spaces},
   journal={Israel J. Math.},
   volume={15},
   date={1973},
   pages={140--155},
}

\bib{CL74}{article}{
   author={Casazza, P.G.},
   author={Lin, B.~L.},
   title={On symmetric basic sequences in Lorentz sequence spaces. II},
   journal={Israel J. Math.},
   volume={17},
   date={1974},
   pages={191--218},
}

 \bib{DKK2003}{article}{
 author={Dilworth, S.~J.},
 author={Kalton, N.~J.},
 author={Kutzarova, Denka},
 title={On the existence of almost greedy bases in Banach spaces},
 journal={Studia Math.},
 volume={159},
 date={2003},
 number={1},
 pages={67--101},
 }

\bib{DKKT2003}{article}{
 author={Dilworth, S.~J.},
 author={Kalton, N.~J.},
 author={Kutzarova, Denka},
 author={Temlyakov, V.~N.},
 title={The thresholding greedy algorithm, greedy bases, and duality},
 journal={Constr. Approx.},
 volume={19},
 date={2003},
 number={4},
 pages={575--597},
 }

\bib{Garling1968}{article}{
 author={Garling, D.~J.~H.},
 title={Symmetric bases of locally convex spaces},
 journal={Studia Math.},
 volume={30},
 date={1968},
 pages={163--181},
 }

\bib{GHO2013}{article}{
 author={Garrig{\'o}s, G.},
 author={Hern{\'a}ndez, E.},
 author={Oikhberg, T.},
 title={Lebesgue-type inequalities for quasi-greedy bases},
 journal={Constr. Approx.},
 volume={38},
 date={2013},
 number={3},
 pages={447--470},
 }

\bib{GW2014}{article}{
 author={Garrig{\'o}s, G.},
 author={Wojtaszczyk, P.},
 title={Conditional quasi-greedy bases in Hilbert and Banach spaces},
 journal={Indiana Univ. Math. J.},
 volume={63},
 date={2014},
 number={4},
 pages={1017--1036},
 }
 
 \bib{PelSin1964}{article}{
   author={Pe\l czy\'nski, A.},
   author={Singer, I.},
   title={On non-equivalent bases and conditional bases in Banach spaces},
   journal={Studia Math.},
   volume={25},
   date={1964/1965},
   pages={5--25},
}
 
\end{biblist}
\end{bibsection}

\end{document}